\newtheorem{remark}{Remark}
\newtheorem{theorem}{Theorem}
\newtheorem{lemma}{Lemma}
\newtheorem{proposition}{Proposition}
\newtheorem{corollary}{Corollary}
\def \R{{\mathbb{R}}}
\def \N{{\mathbb{N}}}
\title{Onsager type conjecture and renormalized solutions for the relativistic Vlasov--Maxwell system}
\author{Claude Bardos  \footnotemark[1] \and Nicolas Besse \footnotemark[2] \and Toan T. Nguyen \footnotemark[3]} 
\begin{document}

\maketitle

\begin{center}
\emph{This paper is dedicated to Walter Strauss
\\on the occasion of his 80th birthday, as token of friendship and admiration 
\\in particular for his contribution to the mathematical theory of Vlasov-Maxwell systems.
}
\end{center}

\renewcommand{\thefootnote}{\fnsymbol{footnote}}

\footnotetext[1]{
Laboratoire J.-L. Lions,
Universit\'e Pierre et Marie Curie - Paris 6  
BP 187, 4 place Jussieu,
75252 Paris, Cedex 5, France
({\tt claude.bardos@gmail.com})}

\footnotetext[2]{
Laboratoire J.-L. Lagrange, 
UMR CNRS/OCA/UCA 7293, 
Universit{\'e} C\^ote d'Azur,
Observatoire de la C\^ote d'Azur,
Bd de l'observatoire CS 34229,
06300 Nice, Cedex 4, France.
({\tt Nicolas.Besse@oca.eu})}

\footnotetext[3]{
Department of Mathematics, 
Penn State University, State College,
PA, 16803, USA
({\tt nguyen@math.psu.edu})}

\begin{abstract}
  In this paper we give a proof of an Onsager type conjecture on conservation of energy and entropies of weak solutions to the relativistic
  Vlasov--Maxwell equations. As concerns the regularity of weak solutions, say in 
  Sobolev spaces $W^{\alpha,p}$, we determine Onsager type exponents $\alpha$ that
  guarantee the conservation of all entropies. In particular,
  the Onsager exponent $\alpha$ is smaller than $\alpha = 1/3$ established for fluid models. Entropies conservation is equivalent
  to the renormalization property, which have been introduced by DiPerna--Lions for studying
  well-posedness of passive
  transport equations and collisionless kinetic equations. For smooth solutions
  renormalization property or entropies conservation are simply the consequence of the chain rule.
  For weak solutions the use of the chain rule is not always justified. Then arises the question about
  the minimal regularity needed for weak solutions to guarantee such properties. 
  In the DiPerna--Lions and Bouchut--Ambrosio theories, renormalization property holds under sufficient
  conditions in terms of the regularity of the advection field, which are roughly speaking an entire
  derivative in some Lebesgue spaces (DiPerna--Lions) or an entire derivative
  in the space of measures with finite total variation (Bouchut--Ambrosio).
  In return there is no smoothness requirement for the advected density, except some natural a priori bounds.
  Here we show that the renormalization property holds for  an electromagnetic field with only a fractional
  space derivative in some Lebesgue spaces. To compensate this loss of derivative for the electromagnetic field,
  the distribution function requires an additional smoothness, typically fractional Sobolev differentiability
  in phase-space.
  As concerns the conservation of total energy, if the macroscopic kinetic
  energy is in $L^2$, then total energy is preserved.

~\\
{\bf Keywords:} Relativistic Vlasov--Maxwell system, Onsager's conjecture, entropies conservation,
renormalization property, energy conservation.

\end{abstract}

\newpage

\section{Introduction}
The dimensionless relativistic Vlasov--Maxwell system reads,
\begin{equation}
\label{eqn:RV}
\partial_t f + v\cdot\nabla_xf + (E+ v\times B)\cdot \nabla_{\xi}f =0,
\end{equation}
\begin{equation}
\label{eqn:M1}
\partial_tE-\nabla \times B=-j, \quad \partial_t B + \nabla \times E =0,  
\end{equation}
\begin{equation}
\label{eqn:M2}
\nabla \cdot E=\rho, \quad \nabla \cdot B =0,
\end{equation}
where $t\in\R$, $x\in \R^3$, ${\xi}\in\R^3$, and $v={\xi}/\sqrt{1+|{\xi}|^2}$ represent time, position, momentum
and velocity of particles, respectively. The distribution function of particles $f=f(t,x,{\xi})$
satisfies the Vlasov equation \eqref{eqn:RV} with acceleration given by the Lorentz force $F_L=E+ v\times B$,
while the electromagnetic field $E=E(t,x)$ and $B=B(t,x)$ satisfies
Maxwell's equations \eqref{eqn:M1}-\eqref{eqn:M2}. The coupling between the Vlasov equation and Maxwell's equations
occurs through the source terms of Maxwell's equations, which are the charge density $\rho=\rho(t,x)$
and the current density $j=j(t,x)$.
These densities are defined as the first ${\xi}$-moments 
of the phase-space density of particles $f$, namely,
\begin{equation}
\label{eqn:rhoj}
\rho(t,x)=\int_{\R^3}f(t,x,{\xi})\,d{\xi}, \quad j(t,x)=\int_{\R^3} vf(t,x,{\xi})\,d{\xi}.  
\end{equation}
The initial value problem associated to the system \eqref{eqn:RV}-\eqref{eqn:rhoj} requires initial conditions
given by,
\begin{eqnarray}
&&f(0,x,{\xi})=f_0(x,{\xi})\geq0, \label{eqn:IC:f0} \\
&&E(0,x)=E_0(x), \ \ B(0,x)=B_0(x), \ \ \nabla\cdot E_0 = \rho_0=\int_{\R^3}f_0\,d{\xi}, \ \ \nabla\cdot B_0=0. \label{eqn:IC:EB0}  
\end{eqnarray}
In addition for the well-posedness of Maxwell's equations \eqref{eqn:M1}-\eqref{eqn:M2}, the densities
of charge $\rho$ and current $j$ must satisfy a compatibility condition given by the charge conservation law,
\begin{equation}
\label{eqn:4C}
\partial_t \rho +\nabla\cdot j =0. 
\end{equation}
This continuity equation is automatically satisfied if the Vlasov equation \eqref{eqn:RV} is satisfied
since it can be recovered by integration in momentum variable of the Vlasov equation. Let us note that 
Maxwell--Gauss equations \eqref{eqn:M2} are satisfied at any time if they are satisfied initially. Indeed, it is a consequence
of time integration of the divergence of the Maxwell--Faraday--Amp\`ere equations \eqref{eqn:M1},
in combination with the continuity equation \eqref{eqn:4C} and initial conditions \eqref{eqn:IC:EB0}.

The Vlasov equation \eqref{eqn:RV} has, at least formally, infinitely many invariants. Indeed,
let $\mathcal{H}:\R \rightarrow \R$ be any smooth function. Multiplying \eqref{eqn:RV}
with $\mathcal{H}'(f)$ and applying the chain rule, we then obtain, 
\begin{equation}
  \label{FRRVM}
  \partial_t \mathcal{H}(f) + v\cdot\nabla_x\mathcal{H}(f) + (E+ v\times B)\cdot \nabla_{\xi}\mathcal{H}(f) =0.
\end{equation}  
A solution $f$ to \eqref{eqn:RV} in the sense of distributions is said to be a renormalized solution if
for any smooth nonlinear function  $\mathcal{H}$, $f$ also solves \eqref{FRRVM} in the sense of distributions.
We say that the field $(v,F_L)$ satisfies the renormalization property if any solution $f$ to \eqref{eqn:RV}
in the sense of distributions is a renormalized solution. The renormalization technique appeared
in the well-posedness of passive advection equations and ODEs \cite{DL89b},
in the analysis of the Boltzmann equation \cite{DL89c}, in the theory of weak solutions
of the compressible Navier-Stokes equations \cite{Lio98} and in the theory of weak solutions
of collisionless kinetic equations such as the Vlasov--Poisson system \cite{DL88a, DL88b}. The
groundbreaking work \cite{DL89b} has highlighted the fundamental link between renormalized solutions
to the passive transport equation,
\begin{equation}
  \label{PDE_L}
\partial_t u + b\cdot\nabla u =0, \quad u:[0,T]\times\R^d\rightarrow \R, \ \ b:[0,T]\times\R^d \rightarrow \R^d, 
\end{equation}  
and the well-posedness theory for the associated ODE,
\begin{equation}
  \label{ODE_L}
  \partial_t X(t,x) = b(t,X(t,x)), \ \ t\in[0,T], \ \ X(0,x)=x\in\R^d, 
\end{equation}  
where $b$ is a non-smooth vector field. Similarly
to entropy conditions for hyperbolic conservations laws,
renormalization property provides additional stability under weak convergence. Indeed
renormalized solutions come with a comparison principle, which allows to show
uniqueness of renormalized solutions and some stability results for sequences of
solutions. In return uniqueness at the PDE level \eqref{PDE_L} implies uniqueness at the
ODE level \eqref{ODE_L}. It was first show in \cite{DL89b} that the renormalization property holds provided
$b\in L_t^1W_x^{1,p}$ with $p\geq  1$, plus a bounded divergence and a global space growth estimate on $b$
(see also \cite{LL04} for the case $W_{\rm loc}^{1,1}$). Moreover,
there is no additional regularity assumption for $u$ except its boundedness or
some $L^p$-bounds. This result was extended to $b\in L_t^1BV_x$ with $\nabla\cdot b \in L_{t,x}^1$,
first in \cite{Bou01} for the Vlasov equation (see also \cite{Hau04} for a related result),
and then in \cite{Amb04} for the general case (see also \cite{CL02}).
Very recently, in \cite{ACF15} the authors develop a local version of the DiPerna--Lions' theory under
no global assumptions on the growth estimate of $b$. We refer the reader to \cite{Amb17} for a recent survey.

For the Vlasov--Poisson system when $f$ is merely $L^1$, the product $Ef$ does not belong
to $L_{\rm loc}^1$. Therefore higher integrability assumptions on $f$ are needed to give a meaning
to the Vlasov--Poisson equation in the sense of distributions. For example, when $d=3$, for the term $Ef$ to belong to $L_{\rm loc}^1$
one needs to have $f\in L^p$ with $p=(12+3\sqrt{5})/11$ (see for instance \cite{DL88a, DL88b}).
To drop out this higher integrability hypotheses, in \cite{DL88a, DL88b}
the authors considered the concept of renormalized solutions
and obtained global existence provided that the total energy is finite and $f_0\log(1+f_0)\in L^1$.
In addition, under some suitable integrability hypotheses on $f$, they can show that the concepts
of weak and renormalized solutions are equivalent. For bounded density $f$,
renormalization property holds because elliptic regularity of the Poisson equation
leads to $E\in W^{1,p}$, with $p>1$ (see \cite{DL88a, DL88b}).
For the Vlasov--Maxwell system the only available global existence result is in \cite{DL89a}, where
the authors have constructed  weak solutions for which it is not possible to show the renormalization
property. Indeed, the best electromagnetic-field regularity, obtained so far 
for the DiPerna--Lions weak solutions, is in \cite{BB18}, where the authors show that
the electromagnetic field $(E,B)$ belongs to $H_{\rm loc}^s(\R_\ast^+\times\R^3)$, with $s=6/(13+\sqrt{142})$,
if the macroscopic kinetic  energy is in $L^2$.

Regularity of rough vector field considered above, i.e 
Sobolev or BV vector fields, is somehow like the Lipschitz case because
there is always a control (in Lebesgue spaces or in the space of measures with finite total variation)
on an entire derivative of the vector field. By contrast, when $b$
is not Lipschitz-like, the use of the chain rule is no longer justified,
and many counterexamples to renormalization have been obtained in 
\cite{Aiz78,Dep03,CLR03,ABC13,ABC14,ACM14,ACM18,CGSW15,CGSW17,YZ17}.

Here, we show that the renormalization property holds for an electromagnetic field with  only a fractional
derivative in some Lebesgue spaces, i.e. $E, B \in L_t^\infty W_x^{\beta,q}$, with $0<\beta <1$ and
$1\leq q \leq \infty$. To compensate this loss of derivative for the electromagnetic field,
the density $f$ requires additional smoothness, typically fractional Sobolev differentiability
in phase-space, i.e. $f \in L_t^1W_{x,\xi}^{\alpha,p}$,
with $0<\alpha <1$ and $1\leq p \leq \infty$. We determine Onsager type exponents \cite{Eyi18} $\alpha$ and $\beta$, which
ensure conservation of all entropies and guarantee that the renormalization property holds.
As concerns the conservation of total energy, if the macroscopic kinetic
energy is in $L^2$, we then show that total energy is preserved.
A comparable work has been done in \cite{AW18} for the renormalization of an active scalar transport equation.

A similar situation occurs with systems of conservation laws of continuum physics, which
are  endowed with natural companion laws: the so called the entropy conditions (inequality versus equality)
coming from the second law of thermodynamics.
In \cite{GMSG18, BGSGTW18} the authors have determined the critical regularity of weak solutions
to a general system of conservation laws to satisfy an associated entropy
conservation law as an equality. They obtained the famous Onsager exponent $1/3$ \cite{Ons49}.
The first result of this kind was obtained in \cite{CET94} (see also \cite{Eyi94}), where
the authors have shown that weak solutions of the incompressible Euler equations conserve energy provided
they possess fractional Besov differentiability of order greater than $1/3$.
Such result has been extended in various directions: In  \cite{DR00,CCFS08,FW18}
the Besov criterium has been optimized; in \cite{LS16,FGSGW17,Yu17,DE18, GMSG18} the authors have considered
compressible Euler, Navier-Stokes and magnetohydrodynamic equations;
works \cite{RRS17,RRS18,BT18,BTW18,DN18} include boundary effects.

\section{Basic properties}
In this section, we recall the basic properties of the relativistic Vlasov--Maxwell system,
which are valid for any smooth solution $(f,E,B)$, vanishing at infinity. Theses formal properties,
in particular natural a priori estimates, are the key cornerstones for proving
the local-in-time well-posedness of this system \cite{Gla96}. 
Consider the following set of equations, 
\begin{equation}
\label{eqn:RV:2}
\partial_t f + v\cdot\nabla_xf + (E+ v\times B)\cdot \nabla_{\xi}f =0, \quad (t,x,\xi)\in \R^+\times \R^3 \times \R^3,
\end{equation}
\begin{equation}
  \label{eqn:M1:2}
  \partial_t E-\nabla \times B=-j, \quad \partial_t B + \nabla \times E =0,  
\end{equation}
\begin{equation}
  \label{eqn:rhoj:2}
\rho(t,x)=\int_{\R^3}f(t,x,{\xi})\,d{\xi}, \quad j(t,x)=\int_{\R^3} vf(t,x,{\xi})\,d{\xi},  
\end{equation}
\begin{equation}
  \label{def:gamma}
  \gamma = \sqrt{1+|{\xi}|^2}, \quad v= \nabla_\xi \gamma = \frac{\xi}{\sqrt{1+|{\xi}|^2}}.
\end{equation}
Observe that once the current density $j$ and initial data $(E_0,B_0)$ are given, the Maxwell equations
\eqref{eqn:M1:2} are well defined. Indeed the Maxwell operator  $M$ defined by,
\begin{equation}
  \label{def:MO}
  X\mapsto MX= \left(
  \begin{array}{r}
    -\nabla \times B \\
    \nabla \times  E
  \end{array}\right), \quad \mbox{ with } \quad
   X= \left(
  \begin{array}{r}
    E \\
    B
  \end{array}\right),
\end{equation}  
is the generator of a strongly continuous unitary group $t\mapsto S(t):=\exp(-iMt)$ in $L^2(\R^3)$ \cite{Yos65, EN99, ABKN11}.
If $(E(t=0),B(t=0))=(E_0,B_0) \in L^2(\R^3)$ and $ j\in L^1(\R^+; L^2(\R^3)) $, then,
using the properties of the group $S(t)$ and the Duhamel formula, we can show that the 
the solution $(E, B)$ to \eqref{eqn:M1:2} belongs to $\mathscr{C}(\R^+; L^2(\R^3))$.
Moreover for any $s\ge 0$, the $H^s$ regularity is preserved,
i.e. the previous statement remains valid if we replace $L^2(\R^3)$ by $H^s(\R^3)$.
In the same way, once the  smooth electromagnetic field $(E,B)$ and  initial
data $f_0(x,\xi)$ are given, the Vlasov equation is then  well defined. Indeed,
introducing the characteristics curves $t\mapsto (X(t),\Xi(t))$, which are the unique and smooth solution
to the ODEs,
\begin{eqnarray}
  \label{eqn:LFlow}
  &&\frac{dX}{dt}(t)=v(\Xi(t)), \quad  \frac{d\Xi}{dt}(t)=E(t,X(t)) + v(\Xi(t))\times B(t,X(t)), \\
  && X(0; 0,x,\xi)=x, \quad \Xi(0;0,x,\xi)=\xi,  \label{eqn:LFCI}
\end{eqnarray}  
the Lagrangian solution to \eqref{eqn:RV:2} is given by (e.g., see \cite{BGP00})
\begin{equation}
  \label{eqn:LSV}
  f(t,x,\xi) = f_0(X(0;t,x,\xi),\Xi(0,t,x,\xi)).
\end{equation}  

The relativistic Vlasov--Maxwell system \eqref{eqn:RV:2}-\eqref{def:gamma} satisfies some formal
conservation laws, summarized in 

\begin{proposition}
\label{prop:FPVM}
  Let  $(f,E,B)$ be a smooth solution, vanishing at infinity, to the relativistic Vlasov--Maxwell system \eqref{eqn:RV:2}-\eqref{def:gamma}. Then the following a priori estimates hold:
  \begin{itemize}
  \item[1.] (Maximum principle).  $\ 0\leq m \leq f_0 \leq M <\infty\ $ implies  $ \  m \leq f(t) \leq M$ for all $t>0$.
  \item[2.] ($L^p$-norm conservation). For all $t\geq 0$,  and $\ 1\leq p \leq \infty$, one has, 
    $\ \|f(t) \|_{L^p(\R^6)}=\|f_0 \|_{L^p(\R^6)}$.
  \item[3.] (Entropies). For any function $\mathcal{H}\in\mathscr{C}^1(\R^+; \R^+)$, one has for all $t\geq 0$,
    \[
     \frac{d}{dt}\int_{\R^3} \int_{\R^3} \mathcal{H}(f(t))\, d\xi dx =0.
    \]
  \item[4.] (Energy conservation).  For all $t\geq 0$ one has,
    \[
    \frac{d}{dt} \left( \int_{\R^3} \int_{\R^3} (\gamma(\xi)-1) f(t) \,d\xi dx + \frac12 \int_{\R^3} (|E(t)|^2 + |B(t)|^2)\, dx \right)=0. 
    \]
  \item[5.] (Momentum conservation).    For all $t\geq 0$ one has, 
    \[
    \frac{d}{dt} \left( \int_{\R^3} \int_{\R^3} \xi f(t)\, d\xi dx + \int_{\R^3} (E(t)\times B(t))\, dx \right)=0. 
    \]
    \end{itemize}
\end{proposition}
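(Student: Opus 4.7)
The plan is to derive each of the five items from the divergence-free nature of the Lorentz field $(v(\xi),E+v\times B)$ in the phase space $(x,\xi)$, combined with the Lagrangian representation \eqref{eqn:LSV} for items 1--3 and direct multiplier arguments for items 4--5. The key preliminary observation, which I would record first, is that
\[
\nabla_x\cdot v(\xi) = 0, \qquad \nabla_\xi\cdot E(t,x)=0, \qquad \nabla_\xi\cdot(v(\xi)\times B(t,x))=0,
\]
the last identity following because $v = \nabla_\xi\gamma$ is a gradient (hence curl-free in $\xi$) and $B$ does not depend on $\xi$. Consequently the characteristic flow generated by \eqref{eqn:LFlow}--\eqref{eqn:LFCI} is measure-preserving on $\R^6$, and the Vlasov equation \eqref{eqn:RV:2} admits an equivalent conservative form.

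For items 1 and 2, I would invoke \eqref{eqn:LSV}: since $f(t,x,\xi)=f_0(X(0;t,x,\xi),\Xi(0;t,x,\xi))$, the maximum principle is immediate, and the conservation of $L^p$ norms follows from the change of variables $(x,\xi)\mapsto (X(0;t,x,\xi),\Xi(0;t,x,\xi))$, whose Jacobian is $1$ by Liouville's theorem applied to the divergence-free field above. For item 3, I would multiply \eqref{eqn:RV:2} by $\mathcal{H}'(f)$ and use the chain rule to obtain \eqref{FRRVM}; integrating over $\R^3_x\times\R^3_\xi$ and using the divergence-free field together with decay at infinity yields the claim.

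For item 4 (energy), I would test the Vlasov equation against $\gamma(\xi)$ and integrate by parts. The $x$-transport term vanishes as a pure divergence; in the $\xi$-transport term, integration by parts produces $\nabla_\xi\gamma\cdot(E+v\times B) = v\cdot E$ (since $v\cdot(v\times B)=0$ and $\nabla_\xi\cdot(E+v\times B)=0$), giving
\[
\frac{d}{dt}\int_{\R^3}\!\int_{\R^3}\gamma f\,d\xi dx = \int_{\R^3} j\cdot E\,dx.
\]
For the field energy, testing \eqref{eqn:M1:2} against $(E,B)$ and using $\nabla\cdot(E\times B)=B\cdot(\nabla\times E)-E\cdot(\nabla\times B)$ produces the Poynting identity $\frac{d}{dt}\tfrac12\int(|E|^2+|B|^2)\,dx = -\int j\cdot E\,dx$. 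The two contributions cancel; the constant $-1$ in $\gamma-1$ is harmless by item 2.

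For item 5 (momentum), I would test the Vlasov equation against $\xi$, so that integration by parts in $\xi$ (using again $\nabla_\xi\cdot(E+v\times B)=0$) yields
\[
\frac{d}{dt}\int_{\R^3}\!\int_{\R^3}\xi\,f\,d\xi dx = \int_{\R^3}(\rho E + j\times B)\,dx.
\]
For the electromagnetic momentum, I would differentiate $E\times B$ in time, substitute from \eqref{eqn:M1:2}, and rewrite $(\nabla\times B)\times B$ and $E\times(\nabla\times E)$ using \eqref{eqn:M2} (with $\nabla\cdot B=0$) to produce the divergence of the Maxwell stress tensor plus the Lorentz-force source $-(\rho E + j\times B)$; the stress tensor integrates to zero by decay at infinity, giving the cancellation. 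The only mild technicality I anticipate is bookkeeping the vector identities for the stress tensor in item 5; items 1--4 are essentially forced by the divergence-free phase-space structure.
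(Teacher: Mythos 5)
Your proof is correct and is precisely the standard argument: the paper itself gives no details but delegates the proof to \cite{BGP00}, where exactly this scheme is carried out --- the divergence-free phase-space field $(v,F_L)$ (using that $v=\nabla_\xi\gamma$ is curl-free so $\nabla_\xi\cdot(v\times B)=0$), Liouville's theorem and the Lagrangian representation \eqref{eqn:LSV} for items 1--3, and the multiplier arguments with the Poynting identity and the Maxwell stress tensor for items 4--5. The only implicit convention worth flagging is in item 3: one tacitly assumes $\mathcal{H}(0)=0$, or reads the integral formally, since otherwise $\mathcal{H}(f)$ is not integrable over the whole phase space --- but this same convention already underlies the statement of the proposition, so your argument is at exactly the intended level of rigor.
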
  

\begin{proof}
The proof is standard and  can be found, for instance, in \cite{BGP00}.
\end{proof}

\begin{remark}
  \label{rmk:OFP}
  Properties of Proposition~\ref{prop:FPVM} are key ingredients to obtain the global-in-time
  existence of weak solutions \cite{DL89a, Gla96, Rei04} and the local-in-time existence, uniqueness and stability
  of classical solutions (e.g. see \cite{Gla96} and references therein). Properties of Proposition~\ref{prop:FPVM}
  are also independent of other a priori invariances described below. Indeed from Maxwell--Faraday equation, $\partial_t B +\nabla \times E=0$,
  we deduce that $\partial_t\nabla \cdot B=0$, which leads to $\nabla \cdot B(t)=0$ for all $t>0$, if initially  $\nabla \cdot B_0=0$.
  In a similar way, from Maxwell--Amp\`ere equation, $\partial_t E - \nabla \times B=-j$, we deduce that $\partial_t (\nabla \cdot E) + \nabla \cdot j=0$.
  Using the charge conservation law \eqref{eqn:4C} (obtained by integration of the Vlasov equation \eqref{eqn:RV:2} with respect to $\xi$)
  we then obtain $\partial_t (\rho-\nabla \cdot E)=0$, which leads to $\nabla \cdot E(t)=\rho(t)$ for all $t>0$, if initially
  $\nabla \cdot E_0 =\rho_0$.
\end{remark}
  
\section{Renormalization property and entropies conservation}

\subsection{Notation}
We denote by $\R^+$ the non-negative real numbers, by
$\mathcal{D}(\R^d)$ the space of indefinitely differentiable with compact support, and
by $\mathcal{D}'(\R^d)$ the space of distributions. We also denote by
$\mathcal{S}(\R^d)$, the space of indefinitely differentiable and rapidly decreasing functions, and 
$\mathcal{S}'(\R^d)$ the dual of $\mathcal{S}(\R^d)$, i.e. the space of tempered distributions. We use the notation $B_{p,q}^\alpha$ (
$0<\alpha <1$, $1\leq p \leq \infty$, $1\leq q \leq \infty$) for Besov spaces, the
definition of which, can be found e.g., in \cite{Ada75, BL76, Tri83, Tri01}.
The notation $W^{\alpha,p}$ ($0<\alpha <1$, $1 \leq p \leq \infty$) stands for the generalized Sobolev spaces of fractional order,
whose precise definition can also be found e.g., in \cite{Ada75, BL76, Tri83, Tri01}.
Let us simply recall first $W^{\alpha,p}(\R^d)=B_{p,p}^{\alpha}(\R^d)$ for $\alpha$ positive but not an integer and $1\leq p \leq\infty$,
and secondly the continuous embeddings: $B_{p,1}^{\alpha}(\R^d) \subset W^{\alpha,p}(\R^d)\subset B_{p,\infty}^{\alpha}(\R^d)$,
with $1\leq p\leq \infty$. We also define the functional space $L_\gamma^1$ such that,
\begin{equation}
  \label{def:L1gamma}
L_\gamma^1= \left\{ f \geq 0 \ \ \mbox{a.e} \ \ \left | \right. \ \ \|f\|_{L_\gamma^1(\R^6)}:=\int_{\R^6}   \gamma f \, dxd{\xi}  < +\infty\right\}.
\end{equation}
Moreover we define the function space $\mathscr{E}$ such that,
\begin{equation}
  \label{entropy_space}
  \mathscr{E} = \left\{\mathcal{H}:\R^+ \mapsto\R^+; \  \mathcal{H}  \mbox{ is non-decreasing}, \ \ \mathcal{H}\in \mathscr{C}^1(\R^+;\R^+),
  \ \  \lim_{\sigma \rightarrow +\infty }\frac{\mathcal{H}(\sigma)}{\sigma} =+\infty \right\}.
\end{equation}

\subsection{Main theorems}
In this section we present our main results. For this, we need to recall the DiPerna--Lions
theorem, which is the only existing result concerning the existence of global-in-time (weak) solutions 
to the Vlasov--Maxwell system in $\R^6$.

\begin{theorem}{(DiPerna--Lions \cite{DL89a}).}
  \label{thm:DL89}
  Let $f_0\in L_\gamma^1\cap L^\infty(\R^6)$, and $E_0,\, B_0 \in L^2(\R^3)$, be initial conditions 
  which satisfy the constraints,
  \begin{equation*}
    \label{thm:eq:IC1}
    \nabla\cdot B_0=0, \quad \nabla\cdot E_0 = \rho_0=\int_{\R^3}f_0\,d{\xi}, \quad \mbox{in} \ \ \mathcal{D}'(\R^3).
  \end{equation*}
  Then, there exists a global-in-time weak solution of the relativistic Vlasov--Maxwell system, i.e. there exists
  functions,
  \begin{equation}
    \label{reg_prop_wsrvm}
    f\in L^\infty\big(\R^+;L_\gamma^1\cap L^\infty(\R^6)\big),\quad
    E,\, B \in  L^\infty\big(\R^+;L^2(\R^3)\big),  \quad
      \rho,\, j \in  L^\infty\big(\R^+;L^{4/3}(\R^3)\big),
  \end{equation}
  such that $(f,E,B)$ satisfy  \eqref{eqn:RV:2}-\eqref{eqn:M1:2} in the sense of distributions, with
  $\rho,\, j$ defined in terms of \eqref{eqn:rhoj:2}. Constraints equations \eqref{eqn:M2} and the charge conservation law
  \eqref{eqn:4C} are statisfied in the sense of distributions.

  In addition, the mapping $t\mapsto f(t)$ (resp. $t\mapsto (E(t),B(t))$) is continuous with respect to the following topologies: the
  standard topology in the space of distributions $\mathcal{D}'(\R^6)$ (resp.  $\mathcal{D}'(\R^3)$),
  the weak topology of $L^2(\R^6)$ (resp. $L^2(\R^3)$), and the strong topology of $H^{-s}(\Omega)$ for any $s>0$ and any
  bounded subset $\Omega$ of $\R^6$ (resp. $\R^3$).

  Futhermore, the total mass, 
  \begin{equation*}
    \int_{\R^3}\int_{\R^3} f(t)dxd\xi,
  \end{equation*} 
  is independent of time, and one has,
  \begin{equation*}
     \|f(t) \|_{L^p(\R^6)} \leq \|f_0 \|_{L^p(\R^6)} \ \mbox{ a.e. } \ 
      t\geq 0,  \ \mbox{ for } \ 1\leq p \leq +\infty,
    \ \ \  \mbox{ and  } \ \ \  \mathcal{E}(t)\leq \mathcal{E}_0<\infty\ \mbox{ a.e. }\ t\geq 0,  
  \end{equation*}
  with the definition,
  \begin{equation}
    \label{thm:eq:IC2}
     \mathcal{E}(t):= \int_{\R^6} \gamma f(t) \, d{\xi}dx+ \frac12 \int_{\R^3} (|E(t)|^2 + |B(t)|^2)\,dx.  
  \end{equation}
 \end{theorem}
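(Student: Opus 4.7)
The plan is to construct approximate smooth solutions by regularization, extract uniform bounds from the formal conservation laws of Proposition~\ref{prop:FPVM}, and pass to the limit by combining weak-$*$ compactness with the velocity averaging technique. I would begin by mollifying the initial data $(f_0,E_0,B_0)$ to smooth, compactly supported data $(f_0^n,E_0^n,B_0^n)$ still satisfying the constraints in \eqref{eqn:IC:EB0} (projecting if necessary to preserve the divergence conditions), and by regularizing the Lorentz force, replacing $(E,B)$ by $(\chi_n\ast_x E,\chi_n\ast_x B)$ inside the Vlasov equation. For such a smoothed system the characteristics \eqref{eqn:LFlow}--\eqref{eqn:LFCI} are classical, and alternating the Lagrangian formula \eqref{eqn:LSV} with the Duhamel representation for Maxwell via the unitary group generated by $M$ yields, through a standard contraction argument, a global classical solution $(f^n,E^n,B^n)$ to which Proposition~\ref{prop:FPVM} applies.

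Next, I collect uniform estimates. The maximum principle and $L^p$ conservation give $f^n$ bounded in $L^\infty(\R^+;L^1\cap L^\infty(\R^6))$, while energy conservation controls $f^n$ in $L^\infty(\R^+;L_\gamma^1)$ and $(E^n,B^n)$ in $L^\infty(\R^+;L^2(\R^3))$ uniformly by $\mathcal{E}_0$. Interpolating via the classical splitting
\[
\rho^n(t,x)=\int_{|\xi|\le R}f^n\,d\xi+\int_{|\xi|>R}f^n\,d\xi\le C\|f^n\|_{L^\infty}R^3+R^{-1}\int \gamma f^n\,d\xi,
\]
optimized in $R$, and using $|v|\le 1$, yields the announced bound on $\rho^n,j^n$ in $L^\infty(\R^+;L^{4/3}(\R^3))$.

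The decisive step, and also the main obstacle, is the passage to the limit. Weak-$*$ compactness extracts limits $f,E,B,\rho,j$ in the spaces of \eqref{reg_prop_wsrvm} along a subsequence; all linear terms pass to the limit immediately. The difficulty lies in the nonlinear product $(E^n+v\times B^n)\cdot\nabla_\xi f^n$ and in identifying the moments of the weak limit with $\rho$ and $j$. The key tool is the velocity averaging lemma of Golse--Lions--Perthame--Sentis: since $f^n$ is bounded in $L^2_{t,x,\xi}$ and $\partial_tf^n+v\cdot\nabla_xf^n$ is controlled, via the uniform $L^2$ bound on $(E^n,B^n)$ and the $L^\infty$ bound on $f^n$, in a Sobolev space of negative $\xi$-regularity, every velocity moment $\int \phi(\xi)f^n\,d\xi$ is relatively compact in $L^p_{\mathrm{loc}}(\R^+\times\R^3)$ for suitable $p$. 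This produces strong convergence of $\rho^n,j^n$ in $L^p_{\mathrm{loc}}$; injecting this into the Duhamel formula for Maxwell, the unitary group $S(t)$ transfers strong compactness of the source to strong compactness of $(E^n,B^n)$ in $L^2_{\mathrm{loc}}$, which combined with the compactness of the velocity averages is enough to identify the weak limit of $(E^n+v\times B^n)\,f^n$ with $(E+v\times B)\,f$ in $\mathcal{D}'$.

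Finally, the constraint equations \eqref{eqn:M2} and the charge conservation law \eqref{eqn:4C} pass to the limit by linearity, and the stated inequalities for $\|f(t)\|_{L^p}$ and $\mathcal{E}(t)$ follow from weak lower semicontinuity of norms (with equality for total mass since it is preserved at each $n$ with smooth, compactly supported data). Time continuity in $\mathcal{D}'$ is read directly from the equations, which identify $\partial_tf,\partial_tE,\partial_tB$ as distributions with controlled time regularity; combined with the uniform weak-$*$ bounds in reflexive spaces this upgrades to weak-$L^2$ continuity, and strong $H^{-s}$ continuity on bounded subsets follows by Rellich's compactness. The genuine hard point throughout remains the averaging step, which is the only mechanism available to turn the weak convergence of $f^n$ into a form strong enough to close the nonlinear coupling between the Vlasov and Maxwell blocks.
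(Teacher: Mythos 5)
The paper itself offers no proof of this theorem: it is quoted verbatim from DiPerna--Lions \cite{DL89a}, so your proposal has to be measured against the original argument. Your overall architecture --- mollify the data and the force, solve the regularized system globally along characteristics plus Duhamel, extract uniform bounds from the conservation laws of Proposition~\ref{prop:FPVM}, interpolate via the $R$-splitting to get $\rho^n, j^n$ bounded in $L^\infty(\R^+;L^{4/3}(\R^3))$, and close the nonlinearity by velocity averaging --- is indeed the DiPerna--Lions scheme, and those parts are sound (modulo the usual care that the regularization be done consistently so the energy estimate holds uniformly in $n$).

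The genuine gap is in what you call the decisive step. You claim that strong $L^p_{\mathrm{loc}}$ compactness of $j^n$, injected into the Duhamel formula, lets the unitary group $S(t)$ transfer strong compactness to $(E^n,B^n)$ in $L^2_{\mathrm{loc}}$. As stated this fails: $j^n$ is bounded only in $L^\infty(\R^+;L^{4/3}(\R^3))$, and $L^{4/3}(\R^3)\not\subset L^2(\R^3)$, so $S(t-s)\,j^n(s)$ is not even defined within the $L^2$ group framework in which $S(t)$ acts; making sense of the Duhamel term with an $L^{4/3}$ source would require Strichartz-type estimates for Maxwell's system that you never establish and that \cite{DL89a} does not use. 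More importantly, strong compactness of the fields is neither available from this route nor needed. The actual mechanism in DiPerna--Lions is a weak--strong pairing: since $E^n, B^n$ depend only on $(t,x)$, testing the nonlinear term against $\Phi\in\mathcal{D}((0,T)\times\R^6)$ reduces it, after integrating in $\xi$ first, to pairings of the form $\int E^n\cdot g^n\,dx\,dt$ and $\int B^n\cdot h^n\,dx\,dt$, where $g^n, h^n$ are velocity averages of $f^n$ against fixed smooth $\xi$-weights built from $\nabla_\xi\Phi$ and $v$. The averaging lemma --- in the version tolerating right-hand sides that are $\xi$-derivatives of $L^p$ functions, which is essential here because the Vlasov right-hand side contains the unknown product $F_L^n f^n$ --- gives strong $L^2_{\mathrm{loc}}$ convergence of $g^n, h^n$, and then the mere weak $L^2$ convergence of the fields suffices to pass to the limit in $\mathcal{D}'$. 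Replacing your field-compactness claim by this pairing repairs the argument; as written, the step would fail. A smaller point in the same spirit: the equality of total mass in the limit needs a tightness argument (e.g., $|v|\leq 1$ plus the uniform $L^1_\gamma$ bound) and does not follow from weak lower semicontinuity alone, which only yields inequalities such as those stated for $\|f(t)\|_{L^p}$ and $\mathcal{E}(t)$.
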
  

\begin{remark}
  \begin{itemize}
  \item[1.] In \cite{Rei04}, the author shows that  solutions of Theorem~\ref{thm:DL89} preserve
    all $L^p$-norms and the mass, i.e. for all $t>0$,
    \[
    \int_{\R^3}\int_{\R^3} f(t)dxd\xi =  \int_{\R^3}\int_{\R^3} f_0dxd\xi, \ \ \  \mbox{ and } \ \ \ 
    \|f(t) \|_{L^p(\R^6)} = \|f_0 \|_{L^p(\R^6)}, \quad  1\leq p \leq +\infty.
    \]
  \item[2.] Using lower semi-continuity, weak solutions of Theorem~\ref{thm:DL89} satisfy, for
    all $\mathcal{H}\in \mathscr{C}^1(\R^+; \R^+)$,
    \[
      \int_{\R^6} \mathcal{H}(f(t)) \,d{\xi} dx  \leq 
    \int_{\R^6} \mathcal{H}(f_0) \, d{\xi} dx ,\quad  \mbox{ for } \  t\geq 0.
    \]
  \end{itemize}
\end{remark}  

Now we intend to produce supplementary sufficient regularity conditions, which will imply the validity
of supplementary conservation laws. As the first step this is the aim of Theorem~\ref{thm:entropies} below:
indeed we first  give sufficient regularity hypotheses which couple
the regularity of the distribution function $f$ with the regularity of the electromagnetic field $(E,B)$. In the second step,
we use Theorem~\ref{thm:entropies} and the results of \cite{BB18} on
the regularity of  DiPerna--Lions  weak solutions, to obtain Corollary~\ref{cor:entropies} below,
which involves only sufficient regularity condition on the distribution function $f$.
As concerns the renormalization property and  entropies conservation, we have,
\begin{theorem}
  \label{thm:entropies}
  Let $(f,E,B)$ be a weak solution of the relativistic Vlasov--Maxwell system \eqref{eqn:RV:2}-\eqref{def:gamma}, given
  by Theorem~\ref{thm:DL89}. Assume that with,  
  \begin{equation}
    \label{reg-feb-entropy-1}
    \alpha, \beta \in \R,\quad \ \  0<\alpha,\, \beta<1,  \quad  \mbox{ and }\quad \alpha \beta + \beta + 3\alpha -1 >0,    
  \end{equation}
  this weak solution satisfies for some $(p,q) \in \N_\ast^2 $ with, 
  \begin{equation}
    \begin{aligned} 
      &\frac{1}{p} + \frac{1}{q}=\frac{1}{r} \le 1 \quad \hbox{if} 
      \quad  1\le p,q<\infty,\\
      &1\le  r < \infty \hbox{ is arbitrary if }  p=q=\infty,  \label{reg-feb-entropy-2}
    \end{aligned}
  \end{equation}
  the supplementary regularity hypotheses,
  \begin{equation}
    \label{reg-feb-entropy-3} 
      f\in L^\infty\big(0,T; W^{\alpha,p}(\R^6)\big), \quad \mbox{ and } \quad  E,B \in L^\infty\big(0,T; W^{\beta,q}(\R^3)\big).
  \end{equation}
  Then for any entropy function $\mathcal{H}\in \mathscr{C}^1(\R^+;\R^+)$, we have
  the renormalization property,
  \begin{equation}
    \label{entropeq_R}
    \partial_t (\mathcal{H}(f)) + \nabla_x\cdot (v\mathcal{H}(f)) + 
    \nabla_{\xi}\cdot (F_L\mathcal{H}(f)) = 0, \quad \mbox{in } \ \mathcal{D}'((0,T)\times\R^6).
  \end{equation}
  Moreover, if $\mathcal{H}\in \mathscr{E}$ and the map,  
  \begin{equation}
    \label{H:UI}
    t\mapsto f(t,\cdot,\cdot) \mbox{ is uniformly integrable in } \R^6, \mbox{ for  a.e. } t\in [0;T],
  \end{equation}
  then we have the local entropy conservation laws,
  \begin{equation}
    \label{entropeq_L1}
    \partial_t \left(\int_{\R^3}d{\xi}\,\mathcal{H}(f)\right) + \nabla_x\cdot \left( \int_{\R^3}d{\xi}\, v\mathcal{H}(f)\right) 
    = 0, \quad \mbox{in } \ \mathcal{D}'((0,T)\times\R^3),
  \end{equation}
  \begin{equation}
    \label{entropeq_L2}
    \partial_t \left(\int_{\R^3}d{x}\,\mathcal{H}(f)\right) + \nabla_{\xi}\cdot \left( \int_{\R^3}d{x}\, F_L\mathcal{H}(f)\right) 
    = 0, \quad \mbox{in } \ \mathcal{D}'((0,T)\times\R^3),
  \end{equation}
  and the global entropy conservation law,
  \begin{equation}
    \label{entropeq_G}
    \int_{\R^6} \mathcal{H}(f(t,x,{\xi})) \, d{\xi}dx =
    \int_{\R^6} \mathcal{H}(f(s,x,{\xi})) \, d{\xi}dx,\quad  \mbox{ for } \  0<s\leq t<T.
  \end{equation}
\end{theorem}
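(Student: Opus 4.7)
The plan is to proceed by mollification in phase-space with two separate scales, $\delta$ in $x$ and $\epsilon$ in $\xi$, in order to exploit the anisotropy between the $W^{\alpha,p}_{x,\xi}$ regularity of $f$ and the purely spatial $W^{\beta,q}_x$ regularity of $(E,B)$. Let $\rho_{\delta,\epsilon}(y,\eta)=\delta^{-3}\rho(y/\delta)\,\epsilon^{-3}\rho(\eta/\epsilon)$ with $\rho$ a radial, even, compactly supported kernel, and set $f_{\delta,\epsilon}=f\ast\rho_{\delta,\epsilon}$. Since $v=\nabla_\xi\gamma$ is curl-free in $\xi$ and $B$ is $\xi$-independent, $\nabla_x\cdot v=0$ and $\nabla_\xi\cdot F_L=0$, so convolving \eqref{eqn:RV:2} puts the commutator errors in conservative form:
\begin{equation*}
\partial_t f_{\delta,\epsilon}+v\cdot\nabla_x f_{\delta,\epsilon}+F_L\cdot\nabla_\xi f_{\delta,\epsilon}=\nabla_x\cdot Q^x_{\delta,\epsilon}+\nabla_\xi\cdot Q^\xi_{\delta,\epsilon},
\end{equation*}
with $Q^x_{\delta,\epsilon}=(vf)_{\delta,\epsilon}-v\,f_{\delta,\epsilon}$ and $Q^\xi_{\delta,\epsilon}=(F_Lf)_{\delta,\epsilon}-F_L\,f_{\delta,\epsilon}$. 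The classical chain rule applied to the smooth $f_{\delta,\epsilon}$ then gives
\begin{equation*}
\partial_t\mathcal{H}(f_{\delta,\epsilon})+\nabla_x\cdot(v\mathcal{H}(f_{\delta,\epsilon}))+\nabla_\xi\cdot(F_L\mathcal{H}(f_{\delta,\epsilon}))=\mathcal{H}'(f_{\delta,\epsilon})\bigl(\nabla_x\cdot Q^x_{\delta,\epsilon}+\nabla_\xi\cdot Q^\xi_{\delta,\epsilon}\bigr),
\end{equation*}
and it suffices to show that this right-hand side converges to $0$ in $\mathcal{D}'((0,T)\times\R^6)$.

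Testing against $\phi\in\mathcal{D}((0,T)\times\R^6)$ and integrating by parts, the error splits into harmless terms involving $\nabla\phi$ (which vanish because $Q^x_{\delta,\epsilon}\to 0$ in $L^p_{\mathrm{loc}}$ and $Q^\xi_{\delta,\epsilon}\to 0$ in $L^r_{\mathrm{loc}}$) and critical terms of the form $\int Q_{\delta,\epsilon}\cdot\mathcal{H}''(f_{\delta,\epsilon})\nabla f_{\delta,\epsilon}\,\phi$. The key commutator estimates, obtained via the Constantin--E--Titi identity
\[
(gh)_{\delta,\epsilon}-g_{\delta,\epsilon}\,h_{\delta,\epsilon}=r_{\delta,\epsilon}(g,h)-(g-g_{\delta,\epsilon})(h-h_{\delta,\epsilon}),
\]
combined with the decomposition $F_L=E+v(\xi)\times B$ (to separate the $x$-regularity of $(E,B)$ from the smooth Lipschitz dependence of $v$ on $\xi$), yield
\begin{equation*}
\|Q^x_{\delta,\epsilon}\|_{L^p_{\mathrm{loc}}}\lesssim \epsilon\,\delta^{\alpha}+\epsilon^{1+\alpha},\qquad
\|Q^\xi_{\delta,\epsilon}\|_{L^r_{\mathrm{loc}}}\lesssim \delta^{\alpha+\beta}+\epsilon.
\end{equation*}
Together with the Bernstein-type bounds $\|\nabla_x f_{\delta,\epsilon}\|_{L^p}\lesssim\delta^{\alpha-1}$ and $\|\nabla_\xi f_{\delta,\epsilon}\|_{L^p}\lesssim\epsilon^{\alpha-1}$ that follow from $f\in W^{\alpha,p}$, the dominant error contributions are $\epsilon^{1+\alpha}\delta^{\alpha-1}$ and $\delta^{\alpha+\beta}\epsilon^{\alpha-1}$. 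Setting $\delta=\epsilon^\theta$ and sending $\epsilon\to 0$, both vanish precisely when $(1-\alpha)/(\alpha+\beta)<\theta<(1+\alpha)/(1-\alpha)$, a nonempty window iff $(1-\alpha)^2<(1+\alpha)(\alpha+\beta)$, i.e.\ iff $\alpha\beta+\beta+3\alpha>1$, exactly the hypothesis \eqref{reg-feb-entropy-1}. This establishes \eqref{entropeq_R}.

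For the local conservation laws \eqref{entropeq_L1}--\eqref{entropeq_L2}, I would test \eqref{entropeq_R} against functions depending only on $(t,x)$, respectively $(t,\xi)$; the flux in the suppressed variable becomes a boundary integral at infinity that vanishes because the uniform integrability hypothesis \eqref{H:UI}, together with the de la Vall\'ee Poussin criterion applied to $\mathcal{H}\in\mathscr{E}$, ensures $\mathcal{H}(f)\in L^\infty(0,T;L^1(\R^6))$. A further integration of \eqref{entropeq_L1} in $x$ (against a time cut-off) then produces the global identity \eqref{entropeq_G}. The hard part is the anisotropic commutator analysis: the improvement over the naive isotropic Onsager threshold $\alpha+\beta>1$ comes entirely from using distinct mollification scales in $x$ and $\xi$, which leverages the fact that $(E,B)$ enjoys regularity only in $x$ while $f$ is fractionally differentiable in all of phase space; balancing these two scales optimally in the Constantin--E--Titi-type bounds is precisely what produces the sharper condition $\alpha\beta+\beta+3\alpha>1$.
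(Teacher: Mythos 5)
Your core analysis is the same as the paper's: the anisotropic two-scale mollification in $x$ and $\xi$, the Constantin--E--Titi decomposition, the splitting $F_L=E+v\times B$ together with the second-order bound $|v-v^\epsilon|\lesssim\epsilon^2$ and the Lipschitz bound on $v$, the dominant rates $\epsilon^{1+\alpha}\delta^{\alpha-1}$ and $\delta^{\alpha+\beta}\epsilon^{\alpha-1}$ (these are exactly the bounds of Lemma~\ref{lem:commutators}, with the roles of the two scales relabelled), the balancing of scales producing the threshold $\alpha\beta+\beta+3\alpha>1$, and the endgame for \eqref{entropeq_L1}--\eqref{entropeq_G} via cutoffs $\Theta_R$, $\Lambda_R$ and de la Vall\'ee Poussin. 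However, there is a genuine gap in how you convert the commutator bounds into the vanishing of the error. After applying the chain rule to $f_{\delta,\epsilon}$ you integrate by parts a second time, producing the three-factor term $\int\phi\,\mathcal{H}''(f_{\delta,\epsilon})\,\nabla_\xi f_{\delta,\epsilon}\cdot Q^\xi_{\delta,\epsilon}$. First, this requires $\mathcal{H}\in\mathscr{C}^2$, whereas the theorem assumes only $\mathcal{H}\in\mathscr{C}^1(\R^+;\R^+)$ (repairable by approximating $\mathcal{H}$, but unaddressed). Second, and more seriously, bounding that term by $\|Q^\xi_{\delta,\epsilon}\|_{L^r}\|\nabla_\xi f_{\delta,\epsilon}\|_{L^p}$ requires the H\"older condition $1/r+1/p\le1$, i.e. $2/p+1/q\le1$, which is strictly stronger than the hypothesis \eqref{reg-feb-entropy-2} ($1/p+1/q=1/r\le1$) and fails, e.g., for $p=q=2$, $r=1$ --- precisely the exponents needed for Corollary~\ref{cor:entropies}. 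Interpolating $\nabla_\xi f_{\delta,\epsilon}$ against $L^\infty$ to raise its integrability degrades the rate $\epsilon^{\alpha-1}$ and would shift the threshold, so this is not a cosmetic issue: as written, your proof does not cover all $(p,q)$ admitted by the statement.

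The paper's arrangement avoids both defects and is the natural repair: instead of mollifying the equation and applying the chain rule, insert the doubly regularized test function $\Psi=(\mathcal{H}'(f^{\eta,\varepsilon,\delta})\Phi)^{\eta,\varepsilon,\delta}$ into the weak formulation \eqref{weak-vlasov}. The error then takes the form $\int\Phi\,\mathcal{H}'(f^{\eta,\varepsilon,\delta})\big[\nabla_x\cdot(\cdots)+\nabla_\xi\cdot(\cdots)\big]$, with the divergence kept \emph{inside} the commutator: each resulting term is a product of only two rough factors (a field increment in $L^q$ and an increment of the mollified gradient of $f$ in $L^p$, already carrying the factor $\delta^{\alpha-1}$ or $\epsilon^{\alpha-1}$), hence lies in $L^p_\xi(L^r_x)$ under exactly $1/p+1/q=1/r\le1$, and is paired with the bounded, compactly supported factor $\mathcal{H}'(f^{\eta,\varepsilon,\delta})\Phi$; only $\mathcal{H}'$ is ever used. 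Two further points: you omit the time mollification (the parameter $\eta$ in the paper), which is needed in some form to justify the manipulation in $t$ since $f$ is merely $L^\infty$ in time; and once the derivative is kept inside the commutator, the term where $\nabla_\xi$ falls on the $\xi$-mollifier against $v\times B$ requires the structural cancellation $\nabla_w\cdot\big([v(\xi-w)-v(\xi)]\times B\big)=0$ (the paper's $T_{B11}=0$) --- your arrangement happens to sidestep it, but the repaired argument must invoke it to avoid losing a factor $\epsilon^{-1}$.
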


The proof of  Theorem~\ref{thm:entropies} is postponed to Section~\ref{ssec:Proof-Thm-entropy}.
A few remarks are now in order.

\begin{remark}
  \label{VP_VM_cases}
  In fact,  Theorem~\ref{thm:entropies} is also true for the Vlasov--Poisson and the non-relativistic
  Vlasov--Maxwell systems, under the same regularity assumptions.
\end{remark}

\begin{remark}
  \label{restrictions_property}
  \begin{itemize}

\item[1.] In fact, Theorem~\ref{thm:entropies} still holds when we replace Sobolev spaces $W^{\alpha,p}$ (resp.
  $W^{\beta,q}$) by Besov spaces  $B_{p,\infty}^{\alpha+ \epsilon}$ (resp.  $B_{q,\infty}^\beta$), with $\epsilon>0$.
  Indeed, even if  Besov  spaces $B_{p,\infty}^\alpha$ do not share the restriction property (needed for
  proving commutator estimates of Lemma~\ref{lem:commutators}), we still have
  the following result (see \cite{Jaf95, AMS13, Bra18}): let $N\geq 2$, $\ 1\leq d < N$, $\ 0<p<q\leq \infty$,
  $\ \alpha'>N(1/p-1)_+$,
  and $\ f\in B_{p,\infty}^{\alpha'}(\R^N)$. Then,
  \[
  f(\cdot, y) \in  \bigcap_{\alpha < \alpha'}  B_{p,\infty}^{\alpha}(\R^d), \quad \mbox{for a.e } \ y\in\R^{N-d}.
  \]
  Therefore, in the Besov-spaces framework, replacing $\alpha$ by $\alpha +\epsilon$ with $\epsilon>0$
  in \eqref{reg-feb-entropy-1}, we observe that
  the condition  $\alpha \beta + \beta + 3\alpha -1 >0$ keeps the same, whereas the phase-space regularity of $f$ is
  slightly better than $B_{p,\infty}^{\alpha}$. Since the interpolation between $B_{p,\infty}^{\alpha+\epsilon}$ and
  $B_{p,p}^{\alpha}$ is $B_{p,r}^{\alpha'}$, with $\alpha < \alpha' < \alpha+\epsilon$, and $1\leq r \leq \infty$, (e.g.,
  Theorem~6.4.5 in \cite{BL76}),  we then have $B_{p,\infty}^{\alpha+\epsilon} \subset W^{\alpha,p}$.

\item[2.] Theorem~\ref{thm:entropies} also includes the H\"older spaces where,
  \begin{equation*}
    \label{reg-feb-entropy-3-holder} 
    f\in L^\infty\big(0,T; \mathscr{C}^{0,\alpha}(\R^6)\big), \quad \mbox{ and }
    \quad  E,B \in L^\infty\big(0,T; \mathscr{C}^{0,\beta}(\R^3)\big).
  \end{equation*}
  It corresponds to case where $p=q=\infty$ in \eqref{reg-feb-entropy-3-holder}, since
  $\mathscr{C}^{0,\alpha}= W^{\alpha,\infty}$. 
  
  \end{itemize}
\end{remark}

\begin{remark}
  \label{Eyink18}
  Our result is almost in agreement with the structure-function scaling exponents
  derived in the study of dissipative anomalies in nearly collisionless plasma turbulence \cite{Eyi18}.
  \begin{itemize} 
  \item [1.] Here, the rigorous analysis is purely deterministic and regularity conditions
  \eqref{reg-feb-entropy-1}-\eqref{reg-feb-entropy-3}
  give a sufficient condition for the conservation of entropies for any individual solution as in \cite{Eyi18}.
  In other words, by contraposition, a necessary condition for anomalous dissipation/non-conservation of entropies is, 
  $\alpha \beta + \beta + 3\alpha -1 <0$ with $0<\alpha, \beta <1$. Nevertheless, this condition is not sufficient.
  Indeed, as in fluid mechanics with the Onsager critical regularity exponent $1/3$ \cite{BT10, Sze11, BTW12},
  this necessary condition does not rule out the existence of some solutions that are less regular than the
  critical regularity (exponent) and that also satisfy the absence of anomalous entropy dissipation.

\item[2.] In \cite{Eyi18} the author obtains, in a particular case, the critical exponent value $\alpha=\sqrt{5}-2$,
  assuming that $f\in L^\infty(0,T; B_{p,\infty}^{\alpha}(\R_x^3; B_{p,\infty}^{\alpha}(\R_\xi^3)))$ and
  $E,B \in L^\infty(0,T;B_{p,\infty}^{\alpha}(\R^3))$, with $p\geq 3$.
  From Remark~\ref{restrictions_property} on the restriction property of Besov spaces, in order to obtain 
  $f\in L^\infty(0,T; B_{p,\infty}^{\alpha}(\R_x^3; B_{p,\infty}^{\alpha}(\R_\xi^3)))$, we must require the
  distribution function $f$ to belong to the functional space
  $ L^\infty(0,T; B_{p,\infty}^{\alpha+\epsilon}(\R^6))$,  with $\epsilon >0$.
  Now, taking $\alpha=\beta$, the condition $\alpha \beta + \beta + 3\alpha -1 >0$ in \eqref{reg-feb-entropy-1}
  becomes $\alpha^2+4\alpha-1>0$, which is satisfied for $\alpha > \sqrt{5}-2$. We then recover the same
  critical exponent value $\alpha=\sqrt{5}-2$, but for  $f\in L(0,T;W^{p,\alpha}(\R^6))$ and
  $E,B \in L^\infty(0,T;W^{q,\alpha}(\R^3))$, with $1/p+1/q\leq 1$.
  Therefore our regularity conditions  \eqref{reg-feb-entropy-1}-\eqref{reg-feb-entropy-3} are
  weaker, but less restrictive that those of \cite{Eyi18}. Indeed we have  $B_{p,\infty}^{\alpha+\epsilon} \subset W^{\alpha,p}$,
  $\forall \epsilon >0$, and the condition $1/p+1/q\leq 1$ is less restrictive than
  the condition $p=q\geq 3$.

\item[3.] In \cite{Eyi18} the author obtains a refined version of the condition \eqref{reg-feb-entropy-1}, by considering
  anisotropic regularity for the distribution function $f$ between the space of velocities and the physical space,
  namely  $f\in L^\infty(0,T; B_{p,\infty}^{\kappa}(\R_x^3; B_{p,\infty}^{\sigma}(\R_\xi^3)))$.
  From Remark~\ref{restrictions_property} on the restriction property of Besov spaces,  this anisotropic regularity implies
  that  $f\in L^\infty(0,T; B_{p,\infty}^{\alpha+\epsilon}(\R^6))$, with $\alpha:=\max\{\kappa,\sigma\}$
  and $\epsilon>0$.  This regularity condition is still more restrictive than our regularity condition, namely 
  $f\in L^\infty(0,T; W^{\alpha,p}(\R^6))$ with the same index $\alpha$.
  In addition anisotropic regularity in phase space is questionable because of the
  following physical argument. Phase-space turbulence involves typical structures known as vortices that are
  the result of the filamentation and the trapping (or wave-particle synchronization) phenomena. The fact
  that characteristic curves roll up in phase space seems to contradict that
  phase-space regularity is anisotropic between the space of velocities and the physical space.
  On the contrary, this mixing motion must propagate regularity versus singularities from one direction to another.
  By constrast, anisotropic regularity between the electromagnetic field $(E,B)$ and the distribution $f$
  is justified and crucial, because the velocity integration of $f$ can lead to additional regularity in
  the physical space for the moments such as charge and current densities, and hence for
  the electromagnetic field (through Maxwell's equations). This is the essence
  of averaging lemma \cite{DL89a} and the spirit of regularity results obtained for the
  Diperna--Lions  weak solutions \cite{BGP04, BB18}. This anisotropy of regularity is handled
  both here and in \cite{Eyi18}.
  \end{itemize}
\end{remark}

\begin{remark}
  \label{rem:uniqueness} In the non-self-consistent case, i.e. when the Lorentz force $F_L$
  is a given external force, renormalization property \eqref{entropeq_R} implies straightforwardly the uniqueness of
  weak solutions of Theorem~\ref{thm:entropies}, if such solutions exist. Indeed, let
  $f^i$, $i=1,2$, be two solutions of the Vlasov equation \eqref{eqn:RV:2}, with initial
  conditions $f_0^i$, $i=1,2$, and where the electromagnetic field $(E,B)$ is prescribed.
  Such solutions satisfy the regularity properties of  Theorem~\ref{thm:entropies},
  in particular \eqref{reg-feb-entropy-3}. Setting $f=f^1-f^2$, and taking $\mathcal{H}(\cdot)=(\cdot)^2$
  ($\mathcal{H}\in \mathscr{E}$), we obtain from Theorem~\ref{thm:entropies},
  \begin{equation*}
    \label{entropeq_RUN}
    \partial_t (\mathcal{H}(f)) + \nabla_x\cdot (v\mathcal{H}(f)) + 
    \nabla_{\xi}\cdot (F_L\mathcal{H}(f)) = 0, \quad \mbox{in } \ \mathcal{D}'((0,T)\times\R^6),
  \end{equation*}
  and
  \begin{equation*}
    \label{entropeq_GUN}
    \int_{\R^6} \mathcal{H}(f(t)) \,d{\xi}dx =
    \int_{\R^6} \mathcal{H}(f_0) \, d{\xi}dx.
  \end{equation*}  
  Therefore, taking $f_0^1=f_0^2$, i.e $f_0=f^1-f^2=0$, we obtain $f=0$ a.e., i.e. $f_1=f_2$ a.e..
  In a similar way we can show the following comparison principle: $f_0^1\leq f_0^2$ a.e. implies 
  $f^1\leq f^2$ a.e..
  Two open issues remain. The first one is the uniqueness of solutions of  Theorem~\ref{thm:entropies},
  which corresponds to the self-consistent case. Of course the existence of solutions of Theorem~\ref{thm:entropies}
  is also an open big problem. Following the program of \cite{DL89a}, the second one is the existence
  and uniqueness of corresponding Lagrangian solutions, i.e. solutions constructed from
  almost-everywhere-well-defined caracteristics curves, as in the smooth framework \eqref{eqn:LFlow}-\eqref{eqn:LSV}.
\end{remark}

\begin{remark}
\label{rem:BouDom}
Another open issue is the case of bounded domains in space, with specular reflection and/or absorbing
conditions \cite{Guo93}. This is not an easy task since, for such natural boundary conditions, some singularities
could occur at the boundary and propagate inside the domain \cite{Guo94, Guo95}.
\end{remark}

From Theorem~\ref{thm:entropies} and the result of \cite{BB18} on
the regularity of the DiPerna--Lions  weak solutions,  we deduce the following corollary,
which involves hypotheses concerning only the distribution function $f$.
\begin{corollary}
  \label{cor:entropies}
  Let $\beta = 6/(13+\sqrt{142})$, and $\alpha\in\R$ solution to,
  \begin{equation}
    \label{condab}
  \alpha \beta + \beta + 3\alpha -1 >0, \quad \mbox{ and } \quad0<\alpha<1.
  \end{equation}
  Let $(f,E,B)$ be a  weak solution 
  to  the relativistic Vlasov--Maxwell system \eqref{eqn:RV:2}-\eqref{def:gamma}, given by Theorem~\ref{thm:DL89}.
  Assume the additional hypotheses: initial conditions $(E_0, B_0)$ belong to $H^1(\R^3)$,
  the distribution function $f$ satisfies the supplementary integrability condition,
  \begin{equation}
    \label{thm:eq:MKEC0}
    \int_{\R^3}  \gamma f \, d{\xi}\in L^\infty\big(0,T; L^2(\R^3)\big), 
  \end{equation}
  and  the regularity assumption,
  \begin{equation}
    \label{reg-f-entropy}
    f\in L^\infty\big(0,T;  H^{\alpha}(\R^6)\big).  
  \end{equation}
  Then for any entropy function $\mathcal{H}\in \mathscr{C}^1(\R^+;\R^+)$,
  renormalization property \eqref{entropeq_R} holds. Moreover, if $\mathcal{H}\in \mathscr{E}$ and the map
  $t\mapsto f(t,\cdot,\cdot)$  is uniformly integrable in $\R^6$ for  a.e.  $t\in \R^+$,
  then local entropy conservation laws \eqref{entropeq_L1}-\eqref{entropeq_L2}, as well as,
  global entropy conservation law \eqref{entropeq_G} hold.
\end{corollary}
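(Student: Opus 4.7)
The strategy is to reduce Corollary~\ref{cor:entropies} to Theorem~\ref{thm:entropies} by upgrading the regularity of the electromagnetic field from the a priori $L^\infty_t L^2_x$ bound of Theorem~\ref{thm:DL89} to a fractional Sobolev bound of the right strength. The key input is the result of \cite{BB18}, which under precisely the supplementary hypotheses we have at hand, namely $(E_0,B_0)\in H^1(\R^3)$ and the macroscopic kinetic energy $\int_{\R^3}\gamma f\,d\xi \in L^\infty(0,T;L^2(\R^3))$ (assumption \eqref{thm:eq:MKEC0}), ensures that the DiPerna--Lions weak solution $(E,B)$ belongs to $H^s_{\rm loc}(\R^+_\ast\times \R^3)$ with $s=6/(13+\sqrt{142})=\beta$. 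This gives us exactly the electromagnetic regularity needed to invoke Theorem~\ref{thm:entropies} with exponent $\beta$.

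Next I would choose integrability indices so as to fit the abstract framework. Taking $p=q=2$, one has $H^\alpha(\R^6)=W^{\alpha,2}(\R^6)$ and $H^\beta(\R^3)=W^{\beta,2}(\R^3)$, and the admissibility condition \eqref{reg-feb-entropy-2} is satisfied with $r=1$ (since $1/p+1/q=1$). By assumption \eqref{reg-f-entropy}, $f\in L^\infty(0,T;W^{\alpha,2}(\R^6))$, and by the \cite{BB18} step, $E,B\in L^\infty_{\rm loc}(0,T;W^{\beta,2}_{\rm loc}(\R^3))$. The exponents $(\alpha,\beta)$ in \eqref{condab} are exactly those in hypothesis \eqref{reg-feb-entropy-1} of Theorem~\ref{thm:entropies}. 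All the structural assumptions of that theorem are thus met, so the renormalization identity \eqref{entropeq_R} and the local entropy laws \eqref{entropeq_L1}--\eqref{entropeq_L2} hold in $\mathcal{D}'((0,T)\times\R^6)$ and $\mathcal{D}'((0,T)\times\R^3)$ respectively, while \eqref{entropeq_G} holds for all $0<s\le t<T$.

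The one delicate point I anticipate is that the \cite{BB18} regularity is only \emph{local} in $\R^+_\ast\times\R^3$, whereas Theorem~\ref{thm:entropies} is stated with global-in-space Sobolev regularity on $(0,T)\times \R^3$. The remedy is to observe that the proof of Theorem~\ref{thm:entropies} is inherently local: the commutator estimates and the distributional identities \eqref{entropeq_R}--\eqref{entropeq_L2} are tested against compactly supported test functions, and one only needs the regularity of $E,B$ on a fixed neighborhood of the support of the test function. I would therefore apply Theorem~\ref{thm:entropies} on each compact subset $K\Subset (0,T)\times\R^3$ (with a time interval $[\tau,T-\tau]$, $\tau>0$) and let $\tau\to 0$ and $K$ exhaust the domain. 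This yields the distributional identities globally. For the global entropy conservation \eqref{entropeq_G}, I would integrate \eqref{entropeq_L1} against a space cut-off $\chi_R(x)$ with $\chi_R\to 1$ as $R\to\infty$, using the $L^\infty_tL^1_{x,\xi}$ control of $\mathcal{H}(f)$ (ensured by $\mathcal{H}\in\mathscr{E}$ together with the uniform integrability hypothesis \eqref{H:UI}) to pass to the limit in the flux term.

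The main technical obstacle is thus bookkeeping: ensuring that the local-in-time degeneracy of the \cite{BB18} estimate near $t=0$ does not spoil the conservation identity up to the initial time, and ensuring that no boundary contribution at spatial infinity arises when passing from the local law \eqref{entropeq_L1} to the global identity \eqref{entropeq_G}. Both issues are handled by the uniform integrability assumption \eqref{H:UI} combined with a standard cut-off and dominated-convergence argument, so no new analytical input beyond Theorem~\ref{thm:entropies} and \cite{BB18} is required.
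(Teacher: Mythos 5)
Your proposal is correct and takes essentially the same route as the paper: invoke Theorem~1.1 of \cite{BB18} (under $(E_0,B_0)\in H^1(\R^3)$ and \eqref{thm:eq:MKEC0}) to upgrade the field regularity to $H^\beta_{\rm loc}$ with $\beta=6/(13+\sqrt{142})$, then apply Theorem~\ref{thm:entropies} with $p=q=2$ under the constraint \eqref{condab}. Your additional localization discussion, reconciling the local regularity $H^\beta_{\rm loc}(\R_\ast^+\times\R^3)$ furnished by \cite{BB18} with the global-in-space hypothesis $E,B\in L^\infty(0,T;W^{\beta,2}(\R^3))$ of Theorem~\ref{thm:entropies}, addresses a point the paper's two-line proof passes over in silence, so your write-up is if anything more careful than the original.
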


\begin{namedproof} {\it of Corollary}~\ref{cor:entropies}.
  Using  assumptions $E_0, B_0 \in H^1(\R^3)$ and \eqref{thm:eq:MKEC0}, from Theorem~1.1 of \cite{BB18},
  we obtain that the electromagnetic field  $(E,B)$ belongs to $H_{\rm loc}^\beta(\R^+\times \R^3)$, with
  $\beta= 6/(13+\sqrt{142})$. Setting $p=q=2$ and $\beta= 6/(13+\sqrt{142})$ in the hypotheses of Theorem~\ref{thm:entropies},
  and using assumption \eqref{reg-f-entropy} under constraints \eqref{condab}, we obtain from
  Theorem~\ref{thm:entropies} the desired result.    
\end{namedproof}

\begin{remark}
  From Corollary~\ref{cor:entropies}, we deduce that for $\alpha$ such that,
  \begin{equation}
    \label{eqn:can}
    1> \alpha > \frac{1-\beta}{3+\beta}=\frac{7+\sqrt{142}}{45 +3\sqrt{142}}\simeq 0.234, 
  \end{equation}
  the Vlasov equation \eqref{eqn:RV:2}, which is a first-order conservation law in the phase-space $\R^6$,
  has an infinite number of conserved entropies. A similar situation occurs with general systems
  of conservation laws, which are studied in \cite{BGSGTW18}  within the regularity framework of
  H\"older spaces $\mathscr{C}^{0,\alpha}$.
  Nevertheless in \cite{BGSGTW18}, the
  authors show conservation of entropies under the sufficient condition $\alpha >1/3$ (the famous
  Onsager exponent \cite{Ons49}), which is more restrictive than the present result, from two points of view.
  First,  Sobolev spaces $H^\alpha$ are less regular than   H\"older spaces $\mathscr{C}^{0,\alpha}$,
  for the same $\alpha$. Secondly our index $\alpha$ is smaller than $1/3$.
  An explanation to such  discrepancy, comes from our commutator estimates which exploit the anisotropy between the velocity
  and physical spaces, whereas commutator estimates in \cite{GMSG18, BGSGTW18} use some Taylor expansions, which
  does not advantage a particular direction of space.
  Finally, we observe that the critical exponent $\alpha=(7+\sqrt{142})/(45 +3\sqrt{142})$, which is
  smaller that $\sqrt{5}-2$, can not be retrieved with the method of \cite{Eyi18},
  since the latter is obtained under the condition $p=q\geq 3$ and hence can not deal with the case $p=q=2$.
\end{remark}

\subsection{Proof of Theorem~\ref{thm:entropies}}
\label{ssec:Proof-Thm-entropy}
Before giving the proof of Theorem~\ref{thm:entropies}, we first introduce some standard regularization operators and
we recall their main properties. Using a smooth non-negative function $\varrho$ such that,
\begin{equation}
  \label{prop-molif}
  \tau\mapsto \varrho(\tau) \geq 0, \quad \varrho\in \mathcal{D}(\R), \quad \mbox{supp}(\varrho)\subset]-1,1[, \quad
      \int_\R \varrho(\tau) d\tau=1,
\end{equation}
one define the radially-symmetric compactly-supported Friedrichs mollifier $z\mapsto \varrho_{\epsilon}(z)$, given by
\begin{eqnarray}
  \R^d & \longrightarrow & \R^+ \\
   z  & \longmapsto &\varrho_\epsilon(z)=\frac{1}{\epsilon^d} \varrho\left(\frac{|z|}{\epsilon}\right), \quad \epsilon>0.
\end{eqnarray}
For any distribution $f\in \mathcal{D}'(\R^+\times\R^6)$, we define its $\mathscr{C}^\infty$-regularization by
\begin{equation}
\label{regtxp}
f^{\eta,\varepsilon,\delta}(t,x,{\xi}) =
\varrho_\eta(t) \mathop{\ast}_{t}\varrho_\varepsilon(x) \mathop{\ast}_{x} \varrho_\delta({\xi}) \mathop{\ast}_{{\xi}} f(t,x,{\xi}),
\end{equation}
where the operator $\ast$ denotes the standard convolution product. We denote by $\langle\cdot,\cdot \rangle$ the
dual bracket between spaces $\mathcal{D}'$ and $\mathcal{D}$.
Using previous definitions, we have for the regularization operator $(\cdot)^\epsilon$
the following standard properties (see, e.g., \cite{AG91}), which are summarized in 
\begin{lemma}
  \label{lem:reg_prop}
  \begin{itemize}
    \item[1.] For any distribution $f\in \mathcal{D}'(\R^d)$, we have,
  \begin{equation*}
    \langle f^\epsilon, g \rangle = \langle f, g^\epsilon\rangle, \quad g\in \mathcal{D}(\R^d).
  \end{equation*}
  \item[2.]
  For any function $f\in L^1\cap L^\infty\cap W^{\alpha,p}(\R^d)$, with $0<\alpha <1$ and $1\leq p \leq \infty$, there exists a constant $C$ such that,
  \begin{eqnarray*}
    &&  \|f^\epsilon \|_{L^{q}(\R^d)} \leq  \|f\|_{L^{q}(\R^d)}, \ \ 1\leq q\leq \infty,\\
    && \|f^\epsilon \|_{W^{\alpha,p}(\R^d)} \leq  \|f\|_{W^{\alpha,p}(\R^d)}, \\
    && \|f^\epsilon -f \|_{L^p(\R^d)} \leq 
    C\epsilon^\alpha  \|f\|_{W^{\alpha,p}(\R^d)},\\
    &&  \|\nabla f^\epsilon \|_{L^p(\R^d)} \leq  C\epsilon^{\alpha-1}  \|f\|_{W^{\alpha,p}(\R^d)}.
  \end{eqnarray*}
  \item[3.] For any function $f\in B_{p, \infty}^{\alpha}(\R^d)$, with $0<\alpha <1$ and $1\leq p \leq \infty$, there exists a constant $C$ such that,  
  \begin{equation*}
    \|f(\cdot - z) -f(\cdot) \|_{L^p(\R^d)} \leq C
    |z|^\alpha  \|f\|_{B_{p,\infty}^{\alpha}(\R^d)}.
  \end{equation*}
  \end{itemize}
\end{lemma}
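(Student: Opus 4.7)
The plan is to prove the three items in the order 1, 3, 2, since the translation-difference estimate of part 3 is the key tool that feeds into the more delicate bounds of part 2.

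For part 1, I use that the radial symmetry of $\varrho$ implies $\varrho_\epsilon(-z)=\varrho_\epsilon(z)$. The identity is then the standard transpose-of-convolution: for $g\in\mathcal{D}(\R^d)$ and $f\in\mathcal{D}'(\R^d)$,
\[
\langle f\ast\varrho_\epsilon,g\rangle=\langle f,\check{\varrho}_\epsilon\ast g\rangle=\langle f,\varrho_\epsilon\ast g\rangle=\langle f,g^\epsilon\rangle,
\]
where $\check{\varrho}_\epsilon(z):=\varrho_\epsilon(-z)=\varrho_\epsilon(z)$.

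For part 3, I invoke the standard Nikolski characterization of $B^\alpha_{p,\infty}(\R^d)$ for $0<\alpha<1$: an equivalent norm is $\|f\|_{L^p}+\sup_{h\neq 0}|h|^{-\alpha}\|f(\cdot-h)-f(\cdot)\|_{L^p}$ (see \cite{Tri83}). The stated inequality is then essentially the definition of the Besov seminorm. The $p=\infty$ case is interpreted with the essential supremum; the characterization still holds.

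For part 2, I combine Young's inequality with Minkowski's integral inequality. The first bound is immediate from Young's inequality and $\|\varrho_\epsilon\|_{L^1}=1$. The second bound follows by applying the translation-difference form of the seminorm, together with $W^{\alpha,p}=B^\alpha_{p,p}$ (recalled just before the lemma), to the identity $f^\epsilon(\cdot-h)-f^\epsilon(\cdot)=\varrho_\epsilon\ast_{x}\bigl(f(\cdot-h)-f(\cdot)\bigr)$, and then applying Young's inequality inside the Gagliardo integral. For the approximation estimate, I write
\[
f^\epsilon(x)-f(x)=\int\varrho_\epsilon(y)\bigl(f(x-y)-f(x)\bigr)\,dy,
\]
take the $L^p_x$-norm through Minkowski, and invoke the embedding $W^{\alpha,p}\hookrightarrow B^\alpha_{p,\infty}$ together with part 3 to get $\|f(\cdot-y)-f(\cdot)\|_{L^p}\leq C|y|^\alpha\|f\|_{W^{\alpha,p}}$; the support constraint $|y|\leq\epsilon$ then produces the $\epsilon^\alpha$ rate. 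For the gradient estimate I use the cancellation $\int\nabla\varrho_\epsilon(y)\,dy=0$ to write
\[
\nabla f^\epsilon(x)=\int\nabla\varrho_\epsilon(y)\bigl(f(x-y)-f(x)\bigr)\,dy,
\]
and run the same Minkowski argument; the pointwise bound $|\nabla\varrho_\epsilon(y)|\leq C\epsilon^{-d-1}\mathbf{1}_{B(0,\epsilon)}(y)$ yields $\int|\nabla\varrho_\epsilon(y)||y|^\alpha\,dy\leq C\epsilon^{\alpha-1}$.

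The only real subtlety is the endpoint $p=\infty$, where Minkowski's integral inequality and part 3 must be read with essential suprema; this requires only that $\varrho_\epsilon\in L^1$ (with mass one) and $|y|\nabla\varrho_\epsilon\in L^1$, both of which are clear. The assumption $f\in L^1\cap L^\infty$ in part 2 merely ensures that $f\in L^q$ for every $q\in[1,\infty]$, so that the first bound is meaningful across the full range; all other bounds use only the $W^{\alpha,p}$-regularity. Since no deep commutator or nonlinear structure appears, the proof is essentially a bookkeeping exercise in Young and Minkowski, combined with the equivalence of Sobolev and Besov norms recalled in the notation subsection.
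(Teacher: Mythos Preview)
Your proof is correct and follows the standard route: duality of convolution for part~1, the Nikolski characterization of $B^\alpha_{p,\infty}$ for part~3, and Young/Minkowski together with the embedding $W^{\alpha,p}\hookrightarrow B^\alpha_{p,\infty}$ for part~2. The paper itself does not give a proof---it simply declares the lemma elementary and leaves it to the reader---so your write-up fills in exactly what was omitted, and there is nothing to compare.
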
  
\begin{proof}
Since the proof is elementary, it is left to the reader.
\end{proof}

In order to prove Theorem~\ref{thm:entropies}, we use some commutator estimates which are given by

\begin{lemma}
\label{lem:commutators}
Let $(f,E,B)$ be a weak solution of the relativistic Vlasov--Maxwell system \eqref{eqn:RV:2}-\eqref{def:gamma},
given by Theorem~\ref{thm:DL89}, satisfying the regularity assumptions
\eqref{reg-feb-entropy-1}-\eqref{reg-feb-entropy-3} of Theorem~\ref{thm:entropies}.
Let us recall that $F_L:=E+ v\times B$
is the Lorentz force field. Then there exist a constant $C_{fs}$ depending on $\|f\|_{L^\infty(0,T;W^{\alpha,p}(\R^6))}$
and a constant $C_{fl}$ depending on  $\|f\|_{L^\infty(0,T;W^{\alpha,p}(\R^6))}$,  $\|E\|_{L^\infty(0,T;W^{\beta,q}(\R^3))}$
and  $\|B\|_{L^\infty(0,T;W^{\beta,q}(\R^3))}$ such that,
\begin{equation}
  \label{commut-fs}
  \big\|\nabla_x \cdot \big( (vf)^{\eta,\varepsilon,\delta} -v^{\delta} f^{\eta,\varepsilon,\delta}
  \big) \big\|_{L^1\left(0,T; L^p(\R^6)\right)} \leq
  C_{fs} \delta^{\alpha +1}\varepsilon^{\alpha-1},
\end{equation}
and
\begin{equation}
  \label{commut-fl}
  \big \|\nabla_{\xi} \cdot \big( (F_Lf)^{\eta,\varepsilon,\delta} -F_L^{\eta,\varepsilon,\delta} f^{\eta,\varepsilon,\delta}
   \big) \big \|_{L^1\left(0,T; L^p(\R_{\xi}^3;L^r\left(\R_x^3)\right)\right)} \leq
    C_{fl} (\varepsilon^{\beta+\alpha}\delta^{\alpha-1} + \delta^\alpha),
\end{equation}
where $\alpha$, $\beta$, $p$, $q$ and $r$ satisfy
 relations \eqref{reg-feb-entropy-1}-\eqref{reg-feb-entropy-2}.
\end{lemma}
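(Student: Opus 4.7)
The plan is to apply the DiPerna--Lions symmetrization identity
\begin{equation*}
(AB)^\epsilon - A^\epsilon B^\epsilon = \int \varrho_\epsilon(w)\bigl(A(\cdot-w)-A(\cdot)\bigr)\bigl(B(\cdot-w)-B(\cdot)\bigr)\,dw - (A-A^\epsilon)(B-B^\epsilon)
\end{equation*}
to each commutator, combined with the shift bound $\|g(\cdot-w)-g(\cdot)\|_{L^p}\leq C|w|^\sigma\|g\|_{W^{\sigma,p}}$ for $g\in W^{\sigma,p}$ and the mollifier estimates of Lemma~\ref{lem:reg_prop}. The key preliminary observations are that $v=v(\xi)$ is independent of $(t,x)$, and that the identity $v=\nabla_\xi\gamma$ from \eqref{def:gamma} implies $\nabla_\xi\cdot(v\times B)=0$, a property preserved by the $\delta$-mollification in $\xi$ since $(v)^\delta=\nabla_\xi(\gamma^\delta)$ is still curl-free.

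For the streaming commutator \eqref{commut-fs}, the independence of $v$ from $(t,x)$ allows the $\eta$- and $\varepsilon$-mollifications to commute with multiplication by $v$, so that the expression reduces to the pure $\xi$-commutator $(v\cdot\nabla_x g)^\delta - v^\delta\cdot(\nabla_x g)^\delta$ with $g=f^{\eta,\varepsilon}$. Applying the symmetrization identity in $\xi$, together with the Lipschitz estimate $|v(\xi-z)-v(\xi)|\leq C|z|$ and the anisotropic shift bound $\|\nabla_x g(\cdot,\cdot-z)-\nabla_x g(\cdot,\cdot)\|_{L^p(\R^6)}\leq C|z|^\alpha\varepsilon^{\alpha-1}\|f\|_{W^{\alpha,p}}$ (which follows from Lemma~\ref{lem:reg_prop} applied to the $\xi$-shifted difference of $f$ that is then mollified in $x$, gaining one $x$-derivative at cost $\varepsilon^{\alpha-1}$), and integrating against $\varrho_\delta(z)$ with $|z|\leq\delta$, yields a bound of order $\delta^{1+\alpha}\varepsilon^{\alpha-1}$. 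Integrating in $t$ over $[0,T]$ produces \eqref{commut-fs}.

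The force commutator \eqref{commut-fl} is more delicate. Since $\nabla_\xi\cdot F_L^{\eta,\varepsilon,\delta}=0$, the $\xi$-divergence falls entirely onto the scalar factor $f$, producing a bilinear commutator of $F_L$ with $\nabla_\xi f$. Writing $F_L=E+v\times B$, I handle the $x$-dependent factors $E$ and $B$ through the bilinear fractional commutator estimate in $x$: for $A\in W^{\beta,q}_x$ and $C\in W^{\alpha,p}_x$ one has $\|(AC)^\varepsilon-A^\varepsilon C^\varepsilon\|_{L^r_x}\leq C\varepsilon^{\alpha+\beta}\|A\|_{W^{\beta,q}}\|C\|_{W^{\alpha,p}}$ with $1/r=1/p+1/q$. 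Applied with $A\in\{E,B\}$ and $C=(\nabla_\xi f)^\delta$, combined with the bound $\|(\nabla_\xi f)^\delta\|_{L^p_{x,\xi}}\leq C\delta^{\alpha-1}\|f\|_{W^{\alpha,p}}$ from Lemma~\ref{lem:reg_prop}, this gives the first contribution $C\varepsilon^{\alpha+\beta}\delta^{\alpha-1}$. The remaining piece, produced by the $\xi$-shift of $v$ inside the symmetrization identity, combines $|v(\xi-z)-v(\xi)|\leq C|z|$ with the $|z|^\alpha$-shift bound for $f$ and carries the factor $B(t,x)$; using $\|B\|_{L^q_x}\leq\|B\|_{W^{\beta,q}_x}$ and Hölder's inequality with $1/p+1/q=1/r$, this contributes a term of order $\delta^{\alpha}$, yielding the second summand of \eqref{commut-fl}.

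The main obstacle is the bookkeeping of the mixed norm $L^1(0,T;L^p_\xi(L^r_x))$ in \eqref{commut-fl}: the bilinear commutator estimate in $x$, naturally formulated at fixed $(t,\xi)$, must be integrated over $\xi$ via Minkowski's inequality, after verifying that the $L^p_\xi W^{\alpha,p}_x$-norm of $(\nabla_\xi f)^\delta$ is controlled by $\delta^{\alpha-1}$ times the full $W^{\alpha,p}(\R^6)$-norm of $f$. The $\eta$-mollification in time is harmless: by Young's convolution inequality in $t$ it costs at most a factor of $T$ in passing from $L^\infty_t$ to $L^1_t$, and it does not affect any of the powers of $\delta$ or $\varepsilon$ in the final bounds.
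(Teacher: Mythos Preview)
Your proposal is correct and follows essentially the same route as the paper: the DiPerna--Lions symmetrization identity, the Lipschitz bound $|v(\xi-z)-v(\xi)|\le C|z|$, the shift/mollifier estimates of Lemma~\ref{lem:reg_prop}, H\"older in $x$ with $1/r=1/p+1/q$, and the divergence-free structure $\nabla_\xi\cdot(v\times B)=0$ are exactly the ingredients the authors use, organized into the same two blocks (free-streaming in $\xi$, force in $x$).

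The one place where your write-up is thinner than the paper's argument is the ``$\xi$-shift of $v$'' contribution in the $v\times B$ term. You assert it is of order $\delta^\alpha$ by combining $|z|$ from $v$ with the $|z|^\alpha$-shift of $f$, but you have already passed the $\nabla_\xi$ onto the commutator, so one must explain why the derivative costs only a factor $\delta^{-1}$ on this particular piece. The paper does this explicitly: after moving $\nabla_\xi$ onto $\varrho_\delta$ (yielding $\nabla_w\varrho_\delta$), it adds and subtracts $f(t-\tau,x-y,\xi)$ to produce two terms $T_{B11}$ and $T_{B12}$. The first carries the undifferenced $f$ and vanishes identically upon integrating by parts because $\nabla_w\cdot\big[(v(\xi-w)-v(\xi))\times B\big]=0$; the second carries $f(\cdot,\xi-w)-f(\cdot,\xi)$ and yields $\int|\nabla\varrho_\delta(w)|\,|w|^{1+\alpha}\,dw\sim\delta^\alpha$. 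You have already flagged the curl-free structure of $v$ as a key preliminary observation, so this is a matter of making the cancellation explicit at the right step rather than a missing idea.
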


\begin{remark}
The precise estimates obtained in Lemma \ref{lem:commutators} seem to be compulsory to obtain the precise Onsager exponents $\alpha,\beta$ in the main theorem, Theorem~\ref{thm:entropies}, instead of the general exponent $1/3$ established for fluid models.  
\end{remark}

\begin{proof}
  We start with two basic estimates, which will be often used along the proof.
  Using  the fundamental theorem of calculus and,
  \begin{equation}
    |\nabla_{\xi} v|=
\Big|    \frac{I_3}{(1+|{\xi}|^2)^{1/2}} - \frac{{\xi}\otimes {\xi}}{(1+|{\xi}|^2)^{3/2}} \Big|\leq \frac{2}{\sqrt{1+|{\xi}|^2}} \leq 2,
    \label{lip-v0}
  \end{equation}
  we obtain the first basic estimate,
  \begin{equation}
    \label{diffv}
    |v({\xi}-w)-v({\xi})| \leq  |w| \int_{0}^1 |\nabla v({\xi}-\tau w)|\, d\tau \leq 2|w|.
  \end{equation}
  Using the fundamental theorem of calculus twice, we obtain componentwise,
  \begin{eqnarray}
    v_i -v_i^\delta &=& \int_{\R^3}dw \,\varrho_\delta(w) ( v_i({\xi})- v_i({\xi}- w)) \nonumber \\
    &=& \sum_j \int_{\R^3}dw \,\varrho_\delta(w) w_j \int_{0}^1 d\tau\, \partial_{j} v_i({\xi}-\tau w) \nonumber \\
    &=&  \sum_j \partial_{j} v_i({\xi}) \int_{\R^3}dw \,\varrho_\delta(w) w_j \nonumber\\
    &&
    +  \sum_{j,k} \int_{\R^3} dw \,\varrho_\delta(w) w_j w_k  \int_0^1d\tau \int_0^1 ds \, \partial_{jk}^2 v_i({\xi}-s\tau w).
    \label{inttv2}
  \end{eqnarray}  
  Since  the smooth function $\varrho$ is radially symmetric and  compactly supported, we have,
  \begin{equation}
    \label{mollifier-prop} 
    \int_{\R^3} dw \,\varrho (w) w_i =0, \quad \mbox{and} \quad
    \int_{\R^3} dw \,\varrho (w) |w_i| |w_j| \leq C_{\varrho} < +\infty, \quad \forall i,j, \in \{1,2,3\},
   \end{equation}
  where  $C_{\varrho}$ is a numerical constant depending only on the function $\varrho$. 
  Using the first equality in \eqref{mollifier-prop}, the first term of the right-hand side of
  \eqref{inttv2} vanishes. Using the second inequality of \eqref{mollifier-prop}, and
  \begin{equation*}
  |  \nabla_{jk}^2 v_i({\xi})|= \Big|\frac{\delta_{ij} {\xi}_k}{(1+|{\xi}|^2)^{3/2}}  +\frac{\delta_{jk} {\xi}_i}{(1+|{\xi}|^2)^{3/2}}
    +\frac{\delta_{ik} {\xi}_j}{(1+|{\xi}|^2)^{3/2}} -
    \frac{3{\xi}_i{\xi}_j{\xi}_k}{(1+|{\xi}|^2)^{3/2}} \Big|\leq  \frac{6}{1+|{\xi}|^2} \leq 6,
    \label{lip2-v0}
  \end{equation*}
  we obtain from \eqref{inttv2} the second basic estimate, 
  \begin{equation}
    |v-v^\delta| \leq 6 C_{\varrho}\delta^2. 
    \label{lip-v}
  \end{equation}
  We now deal with commutator estimate \eqref{commut-fs} for the free-streaming term. We define,
  \begin{multline}
    \label{def-cr}
    r_{\eta,\varepsilon, \delta}(f,g)(t,x,{\xi})=\int_{\R}d\tau\int_{\R^3}dy\int_{\R^3}dw \,
    \varrho_\eta(\tau) \varrho_\varepsilon(y) \varrho_\delta(w)\\
    (f(t-\tau,x-y,{\xi}-w)-f(t,x,{\xi})) (g(t-\tau,x-y,{\xi}-w)-g(t,x,{\xi})).
  \end{multline}
  Using \eqref{def-cr}, it is easy to check that,
  \begin{equation}
    \label{comfs-1}
    (vf)^{\eta,\varepsilon,\delta} = v^\delta f^{\eta,\varepsilon,\delta}
    + r_{\eta,\varepsilon,\delta}(v,f) - (f- f^{\eta,\varepsilon,\delta})(v-v^\delta).
  \end{equation}
  Observing that,
  \[
  r_{\eta,\varepsilon,\delta}(v,f) =  r_{\delta}(v,f^{\eta,\varepsilon}) +  (f- f^{\eta,\varepsilon})(v-v^\delta),
  \]
  Eq. \eqref{comfs-1} becomes,
  \begin{equation}
    \label{comfs-2}
    (vf)^{\eta,\varepsilon,\delta} - v^\delta f^{\eta,\varepsilon,\delta}
    = r_{\delta}(v,f^{\eta,\varepsilon}) - ((f^{\eta,\varepsilon})^\delta - f^{\eta,\varepsilon})(v-v^\delta).
  \end{equation}
Using estimate \eqref{diffv}, Lemma~\ref{lem:reg_prop}, continuous embedding
$W^{\alpha,p}(\R^d)\subset B_{p,\infty}^{\alpha}(\R^d)$ with $1\leq p\leq \infty$,
the restriction property for Sobolev spaces $W^{\alpha,p}(\R^d)$
(see Remark~\ref{restrictions_property}), and regularity assumptions \eqref{reg-feb-entropy-1}-\eqref{reg-feb-entropy-3},
we obtain,
\begin{eqnarray}
  \big \| \nabla_x \cdot r_{\delta}(v,f^{\eta,\varepsilon})\big\|_{L^1(0,T;L^p(\R^6))}
  &\leq& \int_0^T dt \int_{\R^3}dw\,\varrho_\delta(w)\,
  \|(v({\xi}-w)-v({\xi}))\nonumber\\
  && \cdot\, (\nabla_x f^{\eta,\varepsilon}(t,x,\xi-w)
  -\nabla_x f^{\eta,\varepsilon}(t,x,\xi))\|_{L^p(\R_{x\xi}^6)} \nonumber\\
  &\leq& C\int_0^T dt \int_{\R^3}dw\,\varrho_\delta(w)|w|^{\alpha +1}
  \|\nabla_x f^{\eta,\varepsilon}(t) \|_{L^p(\R_x^3;B_{p,\infty}^\alpha(\R_{\xi}^3))} \nonumber\\
  &\leq& C \delta^{\alpha +1}\int_0^T dt\,
  \|\nabla_x f^{\eta,\varepsilon}(t) \|_{L^p(\R_x^3;W^{\alpha,p}(\R_{\xi}^3))} \nonumber\\
  &\leq& C\varepsilon^{\alpha-1}\delta^{\alpha +1}\int_0^T dt\,
  \| f^{\eta}(t) \|_{W^{\alpha, p}(\R_x^3;W^{\alpha,p}(\R_{\xi}^3))} \nonumber\\
  &\leq&C \varepsilon^{\alpha-1}\delta^{\alpha +1}\int_0^T dt \,  \varrho_\eta(t) \ast
  \| f(t) \|_{W^{\alpha, p}(\R^6)} \nonumber\\
  &\leq& C\varepsilon^{\alpha-1}\delta^{\alpha +1}
  \| f\|_{L^1(0,T;W^{\alpha, p}(\R^6))} \label{est:rdvf}.
\end{eqnarray}  
Using estimate \eqref{lip-v}, Lemma~\ref{lem:reg_prop},
the restriction property for Sobolev spaces $W^{\alpha,p}(\R^d)$, and
regularity assumptions \eqref{reg-feb-entropy-1}-\eqref{reg-feb-entropy-3}, we obtain,
\begin{eqnarray}
  \big \| \nabla_x \cdot ((f^{\eta,\varepsilon})^\delta - f^{\eta,\varepsilon})(v-v^\delta))\big\|_{L^1(0,T;L^p(\R^6))}
  &\leq& \|v-v^\delta\|_{L^\infty(\R^3)}
  \big \| (\nabla_xf^{\eta,\varepsilon})^\delta - \nabla_xf^{\eta,\varepsilon}\big\|_{L^1(0,T;L^p(\R^6))} \nonumber\\
  &\leq&C \delta^{\alpha+1}
  \|\nabla_x f^{\eta,\varepsilon} \|_{L^1(0,T; L^p(\R_x^3;W^{\alpha,p}(\R_{\xi}^3)))} \nonumber\\
   &\leq&C \varepsilon^{\alpha-1}\delta^{\alpha+1}
  \| f^{\eta} \|_{L^1(0,T; W^{\alpha,p}(\R_x^3;W^{\alpha,p}(\R_{\xi}^3)))} \nonumber\\
  &\leq & C\varepsilon^{\alpha -1}\delta^{\alpha+1}
  \| f\|_{L^1(0,T;W^{\alpha, p}(\R^6))}. \label{est:dvdf}
\end{eqnarray}
Using \eqref{est:rdvf}-\eqref{est:dvdf}, we obtain from \eqref{comfs-2}, commutator
estimate \eqref{commut-fs}.  We continue with  commutator estimate \eqref{commut-fl}
for the Lorentz force term.   
Using definition \eqref{def-cr}, we first make the following decomposition,
 \begin{equation}
   \label{dec:TETB}
   (F_Lf)^{\eta,\varepsilon,\delta} -F_L^{\eta,\varepsilon,\delta} f^{\eta,\varepsilon,\delta}  = T_E+ T_B,
 \end{equation}
 where
 \begin{equation}
   \label{def:TE}
   T_E= (E f^{\delta})^{\eta,\varepsilon} - E^{\eta,\varepsilon} (f^\delta)^{\eta,\varepsilon}=
     r_{\eta,\varepsilon}(E,f^\delta)-  (E-E^{\eta,\varepsilon}) (f^\delta-  (f^\delta )^{\eta,\varepsilon}),
 \end{equation}
 and
 \begin{equation}
   \label{def:TB}
   T_B= (v\times B f)^{\varepsilon,\delta} - v^\delta \times B^\varepsilon f^{\varepsilon,\delta}.
 \end{equation}
 Let us first deal with the term $T_E$. Passing to the limit $\eta\rightarrow 0$ in $r_{\eta,\varepsilon}(E,f^\delta)$,
 which can be justified by the Lebesgue dominated convergence theorem and regularity assumptions
 \eqref{reg-feb-entropy-1}-\eqref{reg-feb-entropy-3},
 we  obtain, 
\begin{multline}
  \big\|\nabla_\xi\cdot r_{\eta,\varepsilon}(E,f^\delta)\big\|_{L^1(0,T;L^p(\R_\xi^3;L^r(\R_x^3)))}
  \leq  \big\|\nabla_\xi\cdot r_{\varepsilon}(E,f^\delta)\big\|_{L^1(0,T;L^p(\R_\xi^3;L^r(\R_x^3)))}
 \\\leq 
  \int_{\R^3}dy\,\varrho_\varepsilon(y) 
  \|(E(t,x-y)-E(t,x))\\ \cdot\, (\nabla_\xi f^{\delta}(t,x-y,\xi)
  -\nabla_\xi f^{\delta}(t,x,\xi))\|_{L^1(0,T;L^p(\R_\xi^3;L^r(\R_x^3)))}.
  \label{reeefd_1}
\end{multline}
Using  H\"older inequality, Lemma~\ref{lem:reg_prop},  continuous embedding
$W^{\alpha,p}(\R^d)\subset B_{p,\infty}^{\alpha}(\R^d)$ with $1\leq p\leq \infty$,
the restriction property for Sobolev spaces $W^{\alpha,p}(\R^d)$, and
regularity assumptions \eqref{reg-feb-entropy-1}-\eqref{reg-feb-entropy-3},
we obtain from \eqref{reeefd_1},
\begin{eqnarray}
 \big \|\nabla_\xi\cdot r_{\eta,\varepsilon}(E,f^\delta)\big\|_{L^1(0,T;L^p(\R_\xi^3;L^r(\R_x^3)))}
 &\leq& \int_{\R^3}dy\,\varrho_\varepsilon(y)
 \|E(t,x-y)-E(t,x)\|_{L^\infty(0,T;L^q(\R_x^3))} \nonumber\\
 && \|\nabla_\xi f^{\delta}(t,x-y,\xi)
 -\nabla_\xi f^{\delta}(t,x,\xi)\|_{L^1(0,T;L^p(\R_{x\xi}^6))} \nonumber\\
 &\leq& C
 \int_{\R^3}dy\,\varrho_\varepsilon(y) |y|^{\alpha+\beta} \nonumber\\
 && \|E\|_{L^\infty(0,T;B_{q,\infty}^\beta(\R^3))}
  \|\nabla_\xi f^{\delta}\|_{L^1(0,T;L^p(\R_\xi^3;B_{p,\infty}^\alpha(\R_{x}^3)))} \nonumber\\
  &\leq& C\varepsilon^{\alpha+\beta}\|E\|_{L^\infty(0,T;W^{\beta,q}(\R^3))}
  \|\nabla_\xi f^{\delta}\|_{L^1(0,T;L^p(\R_\xi^3;W^{\alpha,p}(\R_{x}^3)))} \nonumber\\
   &\leq& C\varepsilon^{\alpha+\beta} \delta^{\alpha-1}\|E\|_{L^\infty(0,T;W^{\beta,q}(\R^3))}
  \|f\|_{L^1(0,T;W^{\alpha,p}(\R_\xi^3;W^{\alpha,p}(\R_{x}^3)))} \nonumber\\
   &\leq& C\varepsilon^{\alpha+\beta} \delta^{\alpha-1}\|E\|_{L^\infty(0,T;W^{\beta,q}(\R^3))}
  \|f\|_{L^1(0,T;W^{\alpha,p}(\R^6))}. \label{reeefd_2}
\end{eqnarray}
Using the Lebesgue dominated convergence theorem and regularity assumptions
\eqref{reg-feb-entropy-1}-\eqref{reg-feb-entropy-3},
we can pass to the limit $\eta\rightarrow 0$ in the term,
$
(E-E^{\eta,\varepsilon}) (f^\delta-  (f^\delta )^{\eta,\varepsilon}),
$
to obtain, 
\begin{multline}
  \big\|\nabla_\xi\cdot \big((E-E^{\eta,\varepsilon}) (f^\delta-  (f^\delta )^{\eta,\varepsilon})\big) \big\|_{L^1(0,T;L^p(\R_\xi^3;L^r(\R_x^3)))}
  \\\leq \big\|\nabla_\xi\cdot \big ((E-E^{\varepsilon}) (f^\delta-  (f^\delta )^{\varepsilon})\big) \big\|_{L^1(0,T;L^p(\R_\xi^3;L^r(\R_x^3)))}. 
  \label{dEdf_1}
\end{multline}
Using  H\"older inequality, Lemma~\ref{lem:reg_prop},  continuous embedding
$W^{\alpha,p}(\R^d)\subset B_{p,\infty}^{\alpha}(\R^d)$ with $1\leq p\leq \infty$,
the restriction property for Sobolev spaces $W^{\alpha,p}(\R^d)$, and
regularity assumptions \eqref{reg-feb-entropy-1}-\eqref{reg-feb-entropy-3}, we obtain from \eqref{dEdf_1},
\begin{multline}
 \big \|\nabla_\xi\cdot  \big((E-E^{\eta,\varepsilon}) (f^\delta-  (f^\delta )^{\eta,\varepsilon})\big) \big\|_{L^1(0,T;L^p(\R_\xi^3;L^r(\R_x^3)))}
 \\\leq \|E-E^\varepsilon\|_{L^\infty(0,T;L^q(\R^3))} \|\nabla_\xi f^{\delta}
 -(\nabla_\xi f^{\delta})^\varepsilon\|_{L^1(0,T;L^p(\R^6))} 
 \\\leq C \varepsilon^{\alpha+\beta} \|E\|_{L^\infty(0,T;B_{q,\infty}^\beta(\R^3))}
 \|\nabla_\xi f^{\delta}\|_{L^1(0,T;L^p(\R_\xi^3;B_{p,\infty}^\alpha(\R_{x}^3)))} 
   \\\leq C\varepsilon^{\alpha+\beta} \delta^{\alpha-1}\|E\|_{L^\infty(0,T;W^{\beta,q}(\R^3))}
  \|f\|_{L^1(0,T;W^{\alpha,p}(\R_\xi^3;W^{\alpha,p}(\R_{x}^3)))} 
   \\\leq C\varepsilon^{\alpha+\beta} \delta^{\alpha-1}\|E\|_{L^\infty(0,T;W^{\beta,q}(\R^3))}
  \|f\|_{L^1(0,T;W^{\alpha,p}(\R^6))}. \label{dEdf_2}
\end{multline}  
From \eqref{reeefd_1} and \eqref{dEdf_2}, we obtain,
\begin{equation}
\label{divTE}
\big \|\nabla_\xi\cdot T_E \big\|_{L^1(0,T;L^p(\R_x^3;L^r(\R_x^3)))}
\leq C\varepsilon^{\alpha+\beta} \delta^{\alpha-1}\|E\|_{L^\infty(0,T;W^{\beta,q}(\R^3))}
  \|f\|_{L^1(0,T;W^{\alpha,p}(\R^6))}. 
\end{equation}  
We now deal with the Term $T_B$, given by \eqref{def:TB}, and  which can be recast as,
\begin{multline}
  T_B= \int_0^T d\tau \int_{\R^3} dy \int_{\R^3} dw \, \varrho_{\eta}(\tau) \varrho_{\varepsilon}(y)\varrho_{\delta}(w)
  [v({\xi}-w)-v({\xi})] \times B(t-\tau,x-y) f(t-\tau,x-y,{\xi}-w) \\ +\  v\times
   [ (B f^\delta )^{\eta,\varepsilon} - B^{\eta,\varepsilon}  (f^\delta )^{\eta,\varepsilon} ]
  \ +\   (v-v^\delta )\times B^{\eta,\varepsilon} (f^\delta)^{\eta,\varepsilon} = T_{B1} + T_{B2} + T_{B3} \label{def:TB123}.
\end{multline}
The term $\nabla_\xi \cdot T_{B1}$ can be decomposed as,
\begin{multline}
\label{divTB1}
\nabla_\xi \cdot T_{B1}= \\
\int_0^T d\tau\int_{\R^3} dy \int_{\R^3} dw \, \varrho_{\eta}(\tau)\varrho_{\varepsilon}(y) \nabla_w\varrho_{\delta}(w) \cdot
    ([v({\xi}-w)-v({\xi})] \times B(t-\tau,x-y)) f(t-\tau,x-y,{\xi})
    + \\ 
\int_0^T d\tau
 \int_{\R^3} dy \int_{\R^3} dw \, \varrho_{\eta}(\tau)\varrho_{\varepsilon}(y) \nabla_w\varrho_{\delta}(w) \cdot
     ([v({\xi}-w)-v({\xi})] \times B(t-\tau,x-y)) \\ (f(t-\tau,x-y,{\xi}-w)-f(t-\tau,x-y,\xi))
     =T_{B11} + T_{B12}    .
\end{multline}
Using integration by parts, we observe that,
\begin{multline}
T_{B11} =\int_0^T d\tau\int_{\R^3} dy \int_{\R^3} dw \, \varrho_{\eta}(\tau)\varrho_{\varepsilon}(y) \varrho_{\delta}(w)
\\\nabla_w \cdot ([v({\xi}-w)-v({\xi})] \times B(t-\tau,x-y)) f(t-\tau,x-y,{\xi}) =0,
\label{divTB11}
\end{multline}  
because $\nabla_w \cdot ([v({\xi}-w)-v({\xi})] \times B(t,x-y))=0$.
Using  H\"older inequality, estimate \eqref{diffv}, Lemma~\ref{lem:reg_prop},  continuous embedding
$W^{\alpha,p}(\R^d)\subset B_{p,\infty}^{\alpha}(\R^d)$ with $1\leq p\leq \infty$,
the restriction property for Sobolev spaces $W^{\alpha,p}(\R^d)$, and
regularity assumptions \eqref{reg-feb-entropy-1}-\eqref{reg-feb-entropy-3}, we obtain,
\begin{multline}
  \big \| T_{B12} \big\|_{L^1(0,T;L^p(\R_\xi^3;L^r(\R_x^3)))}
 \leq
2\int_0^T d\tau\int_{\R^3} dy \int_{\R^3} dw \, \varrho_{\eta}(\tau)\varrho_{\varepsilon}(y) |\nabla_w\varrho_{\delta}(w)|
|w| \\ \|B(t-\tau,x-y) (f(t-\tau,x-y,{\xi}-w)-f(t-\tau,x-y,\xi)) \|_{L^1(0,T;L^p(\R_\xi^3;L^r(\R_x^3)))} 
\\ \leq
2\int_0^T d\tau\int_{\R^3} dy \int_{\R^3} dw \, \varrho_{\eta}(\tau)\varrho_{\varepsilon}(y) |\nabla_w\varrho_{\delta}(w)|
|w| \\\|B(t-\tau,x-y)\|_{L^\infty(0,T;L^q(\R_x^3))} \|f(t-\tau,x-y,{\xi}-w)-f(t-\tau,x-y,\xi) \|_{L^1(0,T;L^p(\R_{x\xi}^6))} 
\\ \leq
C \int_{\R^3} dw \,|\nabla_w\varrho_{\delta}(w)|
|w|^{\alpha+1}\|B\|_{L^\infty(0,T;L^q(\R_x^3))} \|f \|_{L^1(0,T;L^p(\R_{x}^3;B_{p,\infty}^\alpha(\R_\xi^3)))} 
\\ \leq
 C \delta^{\alpha}\|B\|_{L^\infty(0,T;W^{\beta,q}(\R^3))}
\|f\|_{L^1(0,T;W^{\alpha,p}(\R^6))}. \label{divTB12}
\end{multline}
In the similar way we have obtained estimate \eqref{divTE} for $\nabla_\xi \cdot T_E$, we also obtain for $\nabla_\xi \cdot T_{B2}$,
\begin{equation}
\label{divTB2}
\big \|\nabla_\xi\cdot T_{B2} \big\|_{L^1(0,T;L^p(\R_\xi^3;L^r(\R_x^3)))}
\leq C\varepsilon^{\alpha+\beta} \delta^{\alpha-1}\|B\|_{L^\infty(0,T;W^{\beta,q}(\R^3))}
  \|f\|_{L^1(0,T;W^{\alpha,p}(\R^6))}. 
\end{equation}  
Using estimate \eqref{lip-v}, H\"older inequality and Lemma~\ref{lem:reg_prop}, we obtain for  $\nabla_\xi \cdot T_{B3}$,
\begin{eqnarray}
\big \| \nabla_\xi \cdot T_{B3} \big\|_{L^1(0,T;L^p(\R_\xi^3;L^r(\R_x^3)))} &\leq&
\big \| (v-v^\delta )\times B^{\eta,\varepsilon} (\nabla_\xi f^\delta)^{\eta,\varepsilon} \big\|_{L^1(0,T;L^p(\R_\xi^3;L^r(\R_x^3)))}\nonumber\\
&\leq& C\delta^2 \|B^{\eta,\varepsilon}\|_{L^\infty(0,T;L^q(\R_x^3))} \| \nabla_\xi f^{\eta,\delta, \varepsilon}\|_{L^1(0,T;L^p(\R^6))} \nonumber\\
&\leq& C\delta^{\alpha+1}\|B\|_{L^\infty(0,T;L^q(\R_x^3))} \|f\|_{L^1(0,T;W^{\alpha,p}(\R^6))}.
\label{divTB3}
\end{eqnarray}
Gathering estimates \eqref{divTB11}-\eqref{divTB3}, we obtain from decompositions \eqref{def:TB123}-\eqref{divTB1},
\begin{equation}
\label{divTB}
\big \|\nabla_\xi\cdot T_B\big\|_{L^1(0,T;L^p(\R_x^3;L^r(\R_x^3)))}
\leq C(\varepsilon^{\alpha+\beta} \delta^{\alpha-1} + \delta^{\alpha})\|B\|_{L^\infty(0,T;W^{\beta,q}(\R^3))}
  \|f\|_{L^1(0,T;W^{\alpha,p}(\R^6))}. 
\end{equation}
Eventually, from \eqref{divTE} and \eqref{divTB}, we obtain  
commutator estimate \eqref{commut-fl}, which ends the proof of Lemma~\ref{lem:commutators}
\end{proof}

\begin{namedproof} {\it of Theorem}~\ref{thm:entropies}. Let us now give the proof of the main theorem. The weak formulation for the Vlasov equation reads, 
\begin{equation}
\label{weak-vlasov}
\int_0^T dt \int_{\R^3} dx \int_{\R^3} d{\xi}\, f (\partial_t \Psi + v\cdot \nabla_x \Psi + F_L \cdot \nabla_{\xi} \Psi) = 0, \quad \forall \Psi\in
\mathcal{D}((0,T)\times \R^6), 
\end{equation}
with $F_L:=E + v\times B$.  Let us note that all integrals in \eqref{weak-vlasov} have a sense since
for DiPerna--Lions weak solutions \cite{DL89a} we have $f\in L^\infty(0,T; L^2(\R^6))$, and 
$E,\, B \in L^\infty(0,T; L^2(\R^3))$. We choose in \eqref{weak-vlasov} the test function,
\begin{equation}
  \label{test-function}
  \Psi = \Psi_{\varepsilon,\delta}=(\mathcal{H}'(f^{\eta,\varepsilon,\delta}) \Phi)^{\eta,\varepsilon,\delta} \in \mathcal{D}((0,T)\times \R^6),
\end{equation}
 with $\Phi\in \mathcal{D}((0,T)\times \R^6)$  and $\mathcal{H}\in \mathscr{C}^1(\R^+;\R^+)$.
 Using the first property of Lemma~\ref{lem:reg_prop}  and successive integrations by parts,
 we obtain from \eqref{weak-vlasov}-\eqref{test-function},
\begin{multline}
  \label{weak-entropy-reg}
  \int_0^T dt \int_{\R^3} dx \int_{\R^3} d{\xi} \, \Big\{ \mathcal{H}(f^{\eta,\varepsilon,\delta}) (\partial_t \Phi + v^\delta\cdot \nabla_x
  \Phi + F_L^{\eta,\varepsilon,\delta} \cdot \nabla_{\xi} \Phi) \ 
 \\ 
 + \ \Phi\mathcal{H}'(f^{\eta,\varepsilon,\delta})\Big[\nabla_x \cdot \big( (vf)^{\eta,\varepsilon,\delta}-v^\delta f^{\eta,\varepsilon,\delta}\big)
    +\nabla_{\xi} \cdot \big( (F_Lf)^{\eta,\varepsilon,\delta}-  F_L^{\eta,\varepsilon,\delta}f^{\eta,\varepsilon,\delta}\big)\Big]\Big \} =0, 
\end{multline}
for all $\Phi\in \mathcal{D}((0,T)\times \R^6)$.
We now establish the renormalized Vlasov equation \eqref{entropeq_R}.
Using  regularity assumptions \eqref{reg-feb-entropy-1}-\eqref{reg-feb-entropy-3},
\eqref{test-function}, Lemma~\ref{lem:reg_prop}~and~\ref{lem:commutators},
we obtain from \eqref{weak-entropy-reg},
\begin{multline}
\label{final-local-entropy-est}
\left | \int_0^T dt \int_{\R^3} dx \int_{\R^3} d{\xi} \,  \mathcal{H}(f^{\eta,\varepsilon,\delta})
(\partial_t \Phi +  v^\delta\cdot \nabla_x
  \Phi + F_L^{\eta,\varepsilon,\delta} \cdot \nabla_{\xi} \Phi)
  \right| 
\\
\leq  C_{\ast} \left(\delta^{\alpha +1}\varepsilon^{\alpha-1} +  \varepsilon^{\alpha+\beta}\delta^{\alpha-1}+\delta^\alpha\right),
\end{multline}
where  $ C_{\ast} $ depends on  $\|f\|_{L^\infty(0,T;L^{\infty}(\R^6))}$, $C_{fs}$,   $C_{fl}$, $\mathcal{H}$, and $\Phi$.
Balancing contributions coming from the free-streaming and Lorentz force terms in the right-hand side
of \eqref{final-local-entropy-est}, we obtain,
\begin{equation}
  \label{eqn:2deg}
\varepsilon^{\alpha -1}\delta^{2}  -\delta -\varepsilon^{\alpha+\beta}=0,
\end{equation}
and estimate \eqref{final-local-entropy-est} becomes,
\begin{equation}
\label{final-local-entropy-est-2}
\left | \int_0^T dt \int_{\R^3} dx \int_{\R^3} d{\xi} \,  \mathcal{H}(f^{\eta,\varepsilon,\delta})
(\partial_t \Phi +  v^\delta\cdot \nabla_x
  \Phi + F_L^{\eta,\varepsilon,\delta} \cdot \nabla_{\xi} \Phi)
\right|
\leq  C_{\ast}\eta,
\end{equation}
with the definition,
\[
\eta:=\varepsilon^{\alpha-1}\delta^{\alpha+1}.
\]
Solving quadratic equation \eqref{eqn:2deg} in $\delta$, the only positive solution is given by,
\begin{equation*}
\label{eqn:2degSol}
\delta = \frac{1 +\sqrt{1+4\varepsilon^{2\alpha +\beta -1}}}{2\varepsilon^{\alpha -1}}.
\end{equation*}
Two cases are to be considered according to the value of $\alpha$ and $\beta$:
\begin{itemize}
\item[i)] $2\alpha +\beta -1 <0$. We then have $\delta \simeq \varepsilon^{(\beta+1)/2}$, and
  $\eta\simeq\varepsilon^{(\alpha\beta+\beta+3\alpha-1)/2}\rightarrow 0$ as $\varepsilon \rightarrow 0$
  if $\alpha\beta + \beta +3\alpha -1 > 0$.
\item[ii)]  $2\alpha +\beta -1\geq 0$. We then have $\delta \simeq \varepsilon^{1-\alpha}$, and
  $\eta\simeq\varepsilon^{\alpha(1-\alpha)}\rightarrow 0$ as $\varepsilon \rightarrow 0$
  if $0<\alpha <1$.
\end{itemize}  
Assuming that the free-streaming contribution dominates the Lorentz-force contribution, this 
implies $\varepsilon^{\alpha -1}\delta^{2}  -\delta -\varepsilon^{\alpha+\beta} \gg 0$, which leads to a contradiction
as $\delta\rightarrow 0$.  On the contrary, assuming that the  Lorentz-force  contribution dominates the free-streaming  contribution,
this implies $\varepsilon^{\alpha -1}\delta^{2}  -\delta -\varepsilon^{\alpha+\beta} \ll 0$, which leads also to a contradiction
as first $\delta\rightarrow 0$ and next $\varepsilon \rightarrow 0$.  
In conclusion, if $\alpha\beta + \beta +3\alpha -1 > 0$, then the right-hand side of \eqref{final-local-entropy-est-2}
vanishes as $(\varepsilon, \delta)\rightarrow 0$, and we
obtain the renormalized Vlasov equation \eqref{entropeq_R}.

We continue with the local-in-space  entropy conservation law \eqref{entropeq_L1}. For this purpose,
we first restrict  entropy functions $\mathcal{H}$ to the set $\mathscr{E}$, defined by \eqref{entropy_space}, and secondly
we take in \eqref{final-local-entropy-est-2} a test function $\Phi$ such that,
\[
\Phi(t,x,{\xi})= \Lambda(t,x) \Theta({\xi}), \  \  \mbox{ with } \
\Lambda\in \mathcal{D}((0,T)\times \R^3), \ \mbox{ and } \ \Theta \in \mathcal{D}(\R^3).
\]
We then choose the test function $\Theta$ such that,
\[
\Theta({\xi})= \Theta_R({\xi}) := \theta({\xi}/R), \ \ \mbox{with } \ R>0.
\]
Here the function $\theta \in \mathcal{D}(\R^3)$ is such that 
${\rm supp}(\theta) \subset B_{\R^3}(0,2)$,
$\theta \equiv 1$ on  $B_{\R^3}(0,1)$ and
$0\leq\theta \leq 1$ on  $B_{\R^3}(0,2)\setminus B_{\R^3}(0,1)$.
We then have,
\begin{equation}
  \label{test-PhiR0}
  \Theta_R \longrightarrow 1, \ \ {\rm a.e} \ \  {\rm as} \ \  R\rightarrow +\infty, \ \mbox{ and } \ 
  \nabla_{{\xi}}\Theta_R \longrightarrow 0, \ \ {\rm a.e} \ \  {\rm as} \ \  R\rightarrow +\infty.
\end{equation}
From the uniform integrability assumption \eqref{H:UI}, and the de La Vall\'ee Poussin theorem, there
exists a constant $C_{\mathcal{H}}>0$, independent of  $(\varepsilon,\delta)$, but depending on $\mathcal{H}$ such that, 
\begin{equation}
\label{UI-DVP}
\int_{\R^3} dx \int_{\R^3} d{\xi}\, \mathcal{H}(f^{\eta,\varepsilon,\delta}) \leq C_{\mathcal{H}} <+\infty, \quad \forall \mathcal{H} \in \mathscr{E}.
\end{equation}  
Using estimate \eqref{UI-DVP}, regularity assumptions \eqref{reg-feb-entropy-1}-\eqref{reg-feb-entropy-3}
and property \eqref{test-PhiR0}, we obtain 
from the Lebesgue dominated convergence theorem that,
\begin{equation}
\label{eqn:T1}
 \int_0^T dt \int_{\R^3} dx \int_{\R^3} d{\xi} \,\mathcal{H}(f^{\eta,\varepsilon,\delta}) \partial_t\Lambda \Theta_R
\longrightarrow \int_0^T dt \int_{\R^3} dx \int_{\R^3} d{\xi} \,\mathcal{H}(f^{\eta,\varepsilon,\delta}) \partial_t\Lambda, 
\ \ {\rm as} \ \  R\rightarrow +\infty,
\end{equation}
\begin{equation}
\label{eqn:T2}
 \int_0^T dt \int_{\R^3} dx \int_{\R^3} d{\xi} \,\mathcal{H}(f^{\eta,\varepsilon,\delta}) v^\delta\cdot \nabla_x
\Lambda \Theta_R \longrightarrow \int_0^T dt \int_{\R^3} dx \int_{\R^3} d{\xi} \,\mathcal{H}(f^{\eta,\varepsilon,\delta}) v^\delta\cdot \nabla_x
\Lambda, \ \ {\rm as} \ \  R\rightarrow +\infty,
\end{equation}
and
\begin{equation}
\label{eqn:T3}
\mathcal{R}_1:= \int_0^T dt \int_{\R^3} dx \int_{\R^3} d{\xi} \, \mathcal{H}(f^{\eta,\varepsilon,\delta})
F_L^{\eta,\varepsilon,\delta}f^{\eta,\varepsilon,\delta}\cdot \nabla_{\xi} \Theta_R \Lambda 
\longrightarrow 0\, \ \ {\rm as} \ \  R\rightarrow +\infty.
\end{equation}
Limits \eqref{eqn:T1}-\eqref{eqn:T3} are uniform in $(\eta,\varepsilon, \delta)$, and in addition
there exists a constant $\kappa_1>0$, independent of $(\eta,\varepsilon,\delta)$, but depending
on $\|f\|_{L^\infty(0,T;L^2\cap L^\infty(\R^6))}$,  $\|B\|_{L^\infty(0,T;L^2(\R^3))}$,  $\|E\|_{L^\infty(0,T;L^2(\R^3))}$,
$\Lambda$ and $\theta$ such
that,
\begin{equation}
\label{eqn:T3b}
 |\mathcal{R}_1| \leq \kappa_1 R^{-1}.   
\end{equation}
Using \eqref{eqn:T1}-\eqref{eqn:T3b}, we obtain from
\eqref{final-local-entropy-est-2},
\begin{equation}
\label{final-local-entropy-est-3}
\left | \int_0^T dt \int_{\R^3} dx\ (\partial_t \Lambda + v^\delta\cdot\nabla_x \Lambda)\int_{\R^3} d{\xi} \,  \mathcal{H}(f^{\eta,\varepsilon,\delta})
\right|
\leq  C_{\ast} \eta + \kappa_1 R^{-1}.
\end{equation}
Under the condition, $\alpha\beta + \beta +3\alpha -1 > 0$, the right-hand side of \eqref{final-local-entropy-est-3}
vanishes as $(\eta,\varepsilon, \delta)\rightarrow 0$ and $R\rightarrow +\infty$, and we obtain from
\eqref{final-local-entropy-est-3} the local-in-space conservation law \eqref{entropeq_L1}.
In a similar way, by interchanging the role of the test functions $\Lambda$ and $\Theta$, we obtain
local-in-momentun conservation law \eqref{entropeq_L2}.

We pursue with global entropy conservation law \eqref{entropeq_G}. For this aim,
we first take in \eqref{final-local-entropy-est-3} a test function $\Lambda$ such that,
\[
\Lambda(t,x)= \varphi(t)\Lambda(x), \  \  \mbox{ with } \
\varphi\in \mathcal{D}((0,T)), \ \mbox{ and } \ \Lambda \in \mathcal{D}(\R^3).
\]
We then choose the test function $\Lambda$ such that,
\[
\Lambda(x)= \Lambda_R(x) := \lambda(x/R), \ \ \mbox{with } \ R>0.
\]
Here the function $\lambda \in \mathcal{D}(\R^3)$ is such that 
${\rm supp}(\lambda) \subset B_{\R^3}(0,2)$,
$\lambda \equiv 1$ on  $ B_{\R^3}(0,1)$ and
$0\leq\lambda \leq 1$ on  $(B_{\R^3}(0,2)\setminus B_{\R^3}(0,1))$.
We then have,
\begin{equation}
  \label{test-LambdaR0}
  \Lambda_R \longrightarrow 1, \ \ {\rm a.e} \ \  {\rm as} \ \  R\rightarrow +\infty, \ \mbox{ and } \ 
  \nabla_{x}\Lambda_R \longrightarrow 0, \ \ {\rm a.e} \ \  {\rm as} \ \  R\rightarrow +\infty.
\end{equation}
Using estimate \eqref{UI-DVP}, regularity assumptions \eqref{reg-feb-entropy-1}-\eqref{reg-feb-entropy-3}
and property \eqref{test-LambdaR0}, we obtain
from the Lebesgue dominated convergence theorem that,
\begin{equation}
\label{eqn:T4}
 \int_0^T dt \int_{\R^3} dx \int_{\R^3} d{\xi} \,\mathcal{H}(f^{\eta,\varepsilon,\delta}) \partial_t\varphi \Lambda_R
\longrightarrow \int_0^T dt \int_{\R^3} dx \int_{\R^3} d{\xi} \,\mathcal{H}(f^{\eta,\varepsilon,\delta}) \partial_t\varphi, 
\ \ {\rm as} \ \  R\rightarrow +\infty,
\end{equation}
\begin{equation}
\label{eqn:T5}
\mathcal{R}_2:= \int_0^T dt \int_{\R^3} dx \int_{\R^3} d{\xi} \,\mathcal{H}(f^{\eta,\varepsilon,\delta}) v^\delta\cdot \nabla_x
\Lambda_R \varphi \longrightarrow 0, \ \ {\rm as} \ \  R\rightarrow +\infty.
\end{equation}
Limits \eqref{eqn:T4}-\eqref{eqn:T5} are uniform in $(\eta,\varepsilon, \delta)$, and in addition
there exists a constant $\kappa_2>0$, independent of $(\eta,\varepsilon,\delta)$, but depending
on $C_{\mathcal{H}}$, $\varphi$, and $\lambda$ such
that,
\begin{equation}
\label{eqn:T5b}
 |\mathcal{R}_2| \leq \kappa_2 R^{-1}.   
\end{equation}
Using \eqref{eqn:T4}-\eqref{eqn:T5b}, we obtain from
\eqref{final-local-entropy-est-3},
\begin{equation}
\label{final-global-entropy-est}
\left | \int_0^T dt\, \partial_t \varphi \int_{\R^3} dx \int_{\R^3} d{\xi} \,  \mathcal{H}(f^{\eta,\varepsilon,\delta})
 \right|
\leq  C_{\ast} \eta + \kappa_1 R^{-1} + \kappa_2 R^{-1}.
\end{equation}
Under the condition, $\alpha\beta + \beta +3\alpha -1 > 0$, the right-hand side of \eqref{final-global-entropy-est}
vanishes as $(\eta,\varepsilon, \delta)\rightarrow 0$ and $R\rightarrow +\infty$, and we obtain from
\eqref{final-global-entropy-est} the global entropy conservation law \eqref{entropeq_G}.
This ends the proof of Theorem~\ref{thm:entropies}
\end{namedproof}

\section{Energy conservation}
As concerns conservation of total energy  we have,
\begin{theorem}
   \label{thm:EC1}
  Let $(f,E,B)$ be a weak solution to
  the relativistic Vlasov--Maxwell system \eqref{eqn:RV:2}-\eqref{def:gamma}, given by Theorem~\ref{thm:DL89}.
  If the macroscopic kinetic energy density satisfies the supplementary integrability condition,
  \begin{equation}
    \label{thm:eq:MKEC1}
    \int_{\R^3}  \gamma f \,d{\xi}\in L^\infty\big(0,T; L^2(\R^3)\big), 
  \end{equation}
  then, using definition \eqref{thm:eq:IC2}, we have the local conservation law of total energy,
  \begin{equation}
    \label{TEC_L}
    \partial_t\mathcal{E} + \nabla \cdot \left(\int_{\R^3}  \gamma  f v \,d{\xi}   + E\times B \right)=0, \quad
    \mbox{ in } \  \mathcal{D}'((0,T)\times \R^3),
  \end{equation}
  and the global conservation law of total energy,
  \begin{equation}
    \label{TEC_G}
    \mathcal{E}(t) =\mathcal{E}(s), \ \  \mbox{ for } \  0<s\leq t<T.
  \end{equation}
\end{theorem}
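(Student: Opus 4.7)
The strategy is to produce, separately, a local kinetic-energy moment identity from the Vlasov equation and a local Poynting identity from Maxwell's system, and to sum them in order to obtain \eqref{TEC_L}. The hypothesis \eqref{thm:eq:MKEC1} plays a central role: since $|j|\leq\int\gamma f\,d\xi$, it upgrades the DiPerna--Lions regularity of the current to $j\in L^\infty(0,T;L^2(\R^3))$, which is exactly what makes the product $E\cdot j$ locally integrable and hence all the forthcoming computations rigorous.

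For the Vlasov moment, I would substitute the test function $\Psi(t,x,\xi)=\Lambda(t,x)\gamma(\xi)\chi_R(\xi)$ into the weak formulation of \eqref{eqn:RV:2}, where $\Lambda\in\mathcal{D}((0,T)\times\R^3)$ and $\chi_R(\xi)=\chi(\xi/R)$ with a standard cutoff $\chi\in\mathcal{D}(\R^3)$ equal to $1$ on $B(0,1)$. Using $\nabla_\xi\gamma=v$ and the algebraic identity $(v\times B)\cdot v=0$, the Lorentz contribution reduces, modulo a tail involving $\nabla_\xi\chi_R$, to $\int E\cdot j\,\Lambda\,dx\,dt$. The delicate step is to show that the tail
\[
\int_0^T\!\int_{\R^3}\!\int_{\R^3} f\,\gamma\,F_L\cdot(\nabla_\xi\chi_R)\,\Lambda\,d\xi\,dx\,dt
\]
vanishes as $R\to\infty$. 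This uses $\gamma|\nabla_\xi\chi_R|\leq C\mathbf{1}_{\{|\xi|\sim R\}}$, the decay $\int_{|\xi|\geq R} f\,d\xi\leq R^{-1}\int\gamma f\,d\xi$, the hypothesis \eqref{thm:eq:MKEC1}, and Cauchy--Schwarz in $x$ against $|E|+|B|\in L^\infty(0,T;L^2)$. The outcome is the local moment identity
\[
\partial_t\!\int_{\R^3}\gamma f\,d\xi+\nabla_x\cdot\!\int_{\R^3}\gamma v f\,d\xi=E\cdot j \quad \text{in } \mathcal{D}'((0,T)\times\R^3).
\]

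For the Poynting side, linearity of Maxwell's equations eliminates any commutator difficulty. Mollifying in space ($E^\varepsilon=\varrho_\varepsilon\ast_x E$, and similarly for $B,j$), the smoothed fields still solve \eqref{eqn:M1:2} with source $-j^\varepsilon$, are smooth in $x$, and are Lipschitz in $t$ with values in every $H^s_x$. Dotting the two smoothed equations by $E^\varepsilon$ and $B^\varepsilon$, respectively, and adding produces pointwise
\[
\tfrac12\partial_t(|E^\varepsilon|^2+|B^\varepsilon|^2)+\nabla_x\cdot(E^\varepsilon\times B^\varepsilon)=-E^\varepsilon\cdot j^\varepsilon,
\]
which I would pass to the limit $\varepsilon\to 0$ in $\mathcal{D}'((0,T)\times\R^3)$ using strong $L^\infty_t L^2_x$ convergence of each ingredient (the product term $E\cdot j$ again requires $j\in L^\infty_t L^2_x$, i.e.\ \eqref{thm:eq:MKEC1}). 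Summing with the Vlasov identity delivers \eqref{TEC_L}.

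The global law \eqref{TEC_G} then follows by pairing \eqref{TEC_L} with $\varphi(t)\Lambda_R(x)$, where $\Lambda_R(x)=\lambda(x/R)$ is a standard spatial cutoff, and letting $R\to\infty$. The energy-density term converges to $\int_0^T\varphi'(t)\mathcal{E}(t)\,dt$ by dominated convergence using $\mathcal{E}(t)\leq\mathcal{E}_0$, while the flux-cutoff error vanishes because the spatial flux lies in $L^\infty(0,T;L^1(\R^3))$: indeed $|E\times B|\leq|E||B|\in L^1$ by Cauchy--Schwarz, and $\bigl|\int\gamma v f\,d\xi\bigr|\leq\int\gamma f\,d\xi$ is in $L^1_x$ via the DiPerna--Lions bound on the total energy. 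The main obstacle is precisely the Vlasov step: the weight $\gamma(\xi)$ is not integrable in $\xi$ and, unlike the bounded $\mathcal{H}(f)$ used for entropies in Theorem~\ref{thm:entropies}, it fails to give a compactly supported test function. The entire argument hinges on showing that the $\nabla_\xi\chi_R$ tail dies as $R\to\infty$, which is where the supplementary integrability \eqref{thm:eq:MKEC1} is indispensable.
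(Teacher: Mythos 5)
Your proposal is correct and follows essentially the same route as the paper's proof: the test function $\Lambda(t,x)\,\gamma(\xi)\,\theta(\xi/R)$ in the weak Vlasov formulation with $\nabla_\xi\gamma=v$ and $(v\times B)\cdot v=0$, the $O(R^{-1})$ tail estimate via \eqref{thm:eq:MKEC1} and Cauchy--Schwarz against $E,B\in L^\infty(0,T;L^2(\R^3))$, the weak Poynting identity for Maxwell to convert $\int j\cdot E\,\Lambda$ into the field-energy terms, and the spatial cutoff $\lambda(x/R)$ with dominated convergence (flux in $L^\infty(0,T;L^1(\R^3))$) for the global law \eqref{TEC_G}. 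The only deviations are cosmetic and both sound: you justify the Poynting step by explicit spatial mollification of the linear Maxwell system where the paper simply invokes its weak formulation, and you dominate the passage to $j\cdot E$ by the trivial bound $\int f\,d\xi\le\int\gamma f\,d\xi\in L^\infty(0,T;L^2(\R^3))$ where the paper routes through an $L^{3/2}$ interpolation estimate (Lemma~2.3 of \cite{BB18}).
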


\begin{remark}
  Under assumption \eqref{thm:eq:MKEC1}, it has been proved in \cite{BB18} that the electromagnetic field $(E,B)$
  belongs to $H_{\rm loc}^s(\R_\ast^+\times\R^3)$, with $s=6/(13+\sqrt{142})$. Then, such solutions satisfy
  the conservation laws \eqref{TEC_L}-\eqref{TEC_G}.
\end{remark}

\begin{proof}
Choosing in the weak formulation \eqref{weak-vlasov} the test function,
\begin{equation}
  \label{testfunctionE}
  \Psi(t,x,{\xi}) = \Lambda(t,x)\Theta({\xi}) \gamma({\xi})\in \mathcal{D}((0,T)\times \R^6), \ \  \mbox{ with } \ 
  \Lambda\in \mathcal{D}((0,T)\times \R^3), \  \mbox{ and}  \ \Theta \in \mathcal{D}(\R^3), 
\end{equation}
and using  $\nabla_{\xi} \gamma= v$,  we obtain,
\begin{multline}
  \label{weakenergy}
  \int_0^T dt \int_{\R^3} dx  \left(\int_{\R^3} d{\xi}\, \gamma f \Theta\right) \partial_t \Lambda
  +  \int_0^T dt \int_{\R^3} dx  \left(\int_{\R^3} d{\xi}\, \gamma  f v  \Theta\right) \cdot \nabla_x \Lambda
\\  + \int_0^T dt \int_{\R^3} dx  \left(\int_{\R^3} d{\xi}\,  f v \cdot E \Theta\right)  \Lambda
  + \int_0^T dt \int_{\R^3} dx  \left(\int_{\R^3} d{\xi}\, \gamma f F_L\cdot\nabla_{\xi}\Theta\right)  \Lambda =0.
\end{multline}
We now  establish the local conservation law of total energy.
For this we take in \eqref{weakenergy} a test function $\Theta$ such that,
\[
\Theta({\xi})= \Theta_R({\xi}) := \theta({\xi}/R), \ \ \mbox{with } \ R>0.
\]
Here the function $\theta \in \mathcal{D}(\R^3)$ is such that 
${\rm supp}(\theta) \subset B_{\R^3}(0,2)$,
$\theta \equiv 1$ on  $B_{\R^3}(0,1)$ and
$0\leq\theta \leq 1$ on  $B_{\R^3}(0,2)\setminus B_{\R^3}(0,1)$.
We then have,
\begin{equation}
  \label{testThetaR}
  \Theta_R \longrightarrow 1, \ \ {\rm a.e} \ \  {\rm as} \ \  R\rightarrow +\infty, \ \mbox{ and } \ 
  \nabla_{{\xi}}\Theta_R \longrightarrow 0, \ \ {\rm a.e} \ \  {\rm as} \ \  R\rightarrow +\infty.
\end{equation}  
Using \eqref{testThetaR} and regularity properties \eqref{reg_prop_wsrvm},
we obtain from the Lebesgue dominated convergence theorem,
\begin{equation}
  \label{eq:P1}
  \int_0^T dt \int_{\R^3} dx  \left(\int_{\R^3} d{\xi}\, \gamma f \Theta_R\right) \partial_t \Lambda
  \longrightarrow
  \int_0^T dt \int_{\R^3} dx  \left(\int_{\R^3} d{\xi}\, \gamma f\right) \partial_t \Lambda,\ \mbox{ as } R\rightarrow \infty, 
\end{equation}
and
\begin{equation}
  \label{eq:P2}
  \int_0^T dt \int_{\R^3} dx  \left(\int_{\R^3} d{\xi}\, \gamma  f v  \Theta_R\right) \cdot \nabla_x \Lambda
  \longrightarrow
     \int_0^T dt \int_{\R^3} dx  \left(\int_{\R^3} d{\xi}\, \gamma  f v\right) \cdot \nabla_x \Lambda,\ \mbox{ as } R\rightarrow \infty. 
\end{equation}
Using assumption \eqref{thm:eq:MKEC1}, regularity properties \eqref{reg_prop_wsrvm} and H\"older inequality, we obtain,
\begin{multline}
\left|
\int_0^T dt \int_{\R^3} dx  \left(\int_{\R^3} d{\xi}\, \gamma f F_L\cdot\nabla_{\xi}\Theta_R\right)  \Lambda
\right|
 \leq C  R^{-1} \|\nabla \theta \|_{L^\infty}\|\Lambda \|_{L^\infty} \\
\Big\| \int_{\R^3} d{\xi}\, \gamma f\Big\|_{L^\infty(0,T;L^{r'}(\R^3))} (\| E\|_{L^\infty(0,T;L^r(\R^3))} + \| B\|_{L^\infty(0,T;L^r(\R^3))}) \longrightarrow 0, \
\mbox{ as } R\rightarrow \infty,
\label{eq:P3}
\end{multline}
with $1/r+1/r'=1$ and setting $r=2$.
We now claim that,
\begin{equation}
  \label{domL1}
  |f v \cdot E \Lambda| \leq |\Lambda| |E| f \  \in L^\infty(0,T;L^1(\R^6)) \ \  \mbox{ if }  \ \ \int_{\R^3} d{\xi}\, \gamma f \in
   L^\infty(0,T;L^{3/2}(\R^3)).
\end{equation}
Indeed using interpolation Lemma~2.3 in \cite{BB18}, we obtain,
\begin{equation}
  \label{interplem}
  \Big \| \int_{\R^3} d{\xi}\, f  \Big \|_{L^\infty(0,T;L^2(\R^3))}
  \leq 9\|f\|_{L^\infty}^{1/4}  \Big \| \int_{\R^3} d{\xi}\,\gamma f \Big \|_{L^\infty(0,T;L^{3/2}(\R^3))}^{3/4}.
\end{equation}
Therefore  \eqref{domL1} results from \eqref{interplem} and Cauchy-Schwarz inequality. We notice that the
$L^{3/2}$-integrability condition in \eqref{domL1} results from regularity properties \eqref{reg_prop_wsrvm}, assumption \eqref{thm:eq:MKEC1}
and standard interpolation results between Lebesgue spaces.
Using \eqref{testThetaR} we have $f v \cdot E \Theta_R \Lambda \rightarrow f v \cdot E \Lambda\ $ a.e. as $R\rightarrow +\infty$.
Moreover using \eqref{domL1}, we obtain, from the Lebesgue dominated convergence theorem,
\begin{equation}
  \label{eq:P4}
  \int_0^T dt \int_{\R^3} dx  \left(\int_{\R^3} d{\xi}\,  f v \cdot E \Theta_R\right)  \Lambda
  \longrightarrow  \int_0^T dt \int_{\R^3} dx \,  j \cdot E \Lambda, \ \mbox{ as } R\rightarrow \infty.
\end{equation}  
Using the weak formulation of the Maxwell equation, we obtain, 
\begin{equation}
  \label{eq:P5}
  \int_0^T dt \int_{\R^3} dx\,   j \cdot E \Lambda =
    \int_0^T dt \int_{\R^3} dx\, \frac{|E|^2 + |B|^2}{2}\,\partial_t\Lambda +  \int_0^T dt \int_{\R^3} dx\, E\times B\cdot\nabla_x\Lambda.  
\end{equation}  
Using \eqref{eq:P1}-\eqref{eq:P3} and  \eqref{eq:P4}-\eqref{eq:P5}, we obtain from \eqref{weakenergy},
\begin{multline}
 \label{weakenergy2}
  \int_0^T dt \int_{\R^3} dx  \left\{ \left(\int_{\R^3} d{\xi}\, \gamma f\right)  + \frac{|E|^2 + |B|^2}{2} \right\}\partial_t \Lambda
  \\ +  \int_0^T dt \int_{\R^3} dx
  \left\{ \left(\int_{\R^3} d{\xi}\, \gamma  f v \right)  + E\times B \right \}\cdot \nabla_x \Lambda
  =0,
\end{multline}
which gives the local conservation law of total energy \eqref{TEC_L}. We continue by deriving the global
conservation law of total energy. For this we take in \eqref{weakenergy2} a test function $\Lambda$ such that,
\[
\Lambda(t,x)= \varphi(t)\Lambda(x), \  \  \mbox{ with } \
\varphi\in \mathcal{D}((0,T)), \ \mbox{ and } \ \Lambda \in \mathcal{D}(\R^3).
\]
We then choose the test function $\Lambda$ such that,
\[
\Lambda(x)= \Lambda_R(x) := \lambda(x/R), \ \ \mbox{with } \ R>0.
\]
Here the function $\lambda \in \mathcal{D}(\R^3)$ is such that 
${\rm supp}(\lambda) \subset B_{\R^3}(0,2)$,
$\lambda \equiv 1$ on  $ B_{\R^3}(0,1)$ and
$0\leq\lambda \leq 1$ on  $(B_{\R^3}(0,2)\setminus B_{\R^3}(0,1))$.
We then have,
\begin{equation}
  \label{testLambdaR}
  \Lambda_R \longrightarrow 1, \ \ {\rm a.e} \ \  {\rm as} \ \  R\rightarrow +\infty, \ \mbox{ and } \ 
  \nabla_{x}\Lambda_R \longrightarrow 0, \ \ {\rm a.e} \ \  {\rm as} \ \  R\rightarrow +\infty.
\end{equation}  
Using \eqref{testLambdaR}  and regularity properties \eqref{reg_prop_wsrvm}, especially $f\in L^\infty (0,T; L_\gamma^1(\R^6))$
and $E,\, B \in L^\infty(0,T; L^2(\R^3))$, we obtain from the Lebesgue dominated convergence theorem,
\begin{multline}
  \label{Reste1}
  \int_0^T dt \int_{\R^3} dx  \left\{ \left(\int_{\R^3} d{\xi}\, \gamma f\right)  + \frac{|E|^2 + |B|^2}{2} \right\}\Lambda_R\partial_t\varphi
  \longrightarrow \\
  \int_0^T dt \int_{\R^3} dx  \left\{ \left(\int_{\R^3} d{\xi}\, \gamma f\right)  + \frac{|E|^2 + |B|^2}{2} \right\}\partial_t\varphi,
  \ \  {\rm as} \ \  R\rightarrow +\infty,
\end{multline}
and
\begin{equation}
  \label{Reste3}
  \int_0^T dt \int_{\R^3} dx
  \left\{ \left(\int_{\R^3} d{\xi}\, \gamma  f v \right)  + E\times B \right \}\cdot \nabla_x \Lambda_R \varphi
  \longrightarrow 0,\ \  {\rm as} \ \  R\rightarrow +\infty.
\end{equation}
Using \eqref{Reste1}-\eqref{Reste3}, and passing to the limit $R\rightarrow +\infty$ in \eqref{weakenergy2}, with $\Lambda(t,x)=\varphi(t)\lambda(x/R)$, 
we obtain,
\begin{equation}
  \label{weakenergy3}
  \int_0^T dt\, \partial_t\varphi \int_{\R^3} dx  \left\{ \left(\int_{\R^3} d{\xi}\, \gamma f\right)  + \frac{|E|^2 + |B|^2}{2} \right\}=0,
\end{equation}
which gives the global conservation law of total energy \eqref{TEC_G}.
\end{proof}

\begin{remark}
  \begin{itemize}
  \item[1.] If $E, \, B \in L^\infty(0,T;L^\infty(\R^6))$,
    we observe that the proof of Theorem~\ref{thm:EC1} remains valid
    without condition \eqref{thm:eq:MKEC1}, and then local and global
    conservation of total energy  \eqref{TEC_L}-\eqref{TEC_G} are satisfied.
  \item[2.] Using the continuous embedding $W^{\beta,q}(\R^3)\subset L^{3q/(3-\beta q)}(\R^3)$, with $\beta q <3$,
    we observe that if  $E, \, B \in L^\infty(0,T;L^2\cap W^{\beta,q}(\R^3))$, and
    \begin{equation*}
      \int_{\R^3} d{\xi}\, \gamma f \in L^\infty\big(0,T; L^{3q/((3+\beta)q-3)}(\R^3)\big), 
    \end{equation*}
    then estimates \eqref{eq:P3} and \eqref{eq:P4} still hold. Therefore
     local and global conservation of total energy  \eqref{TEC_L}-\eqref{TEC_G} are satisfied.
  \end{itemize}
\end{remark}

\section*{Acknowledgments}
The authors would like to thank Gregory Eyink for his constructive comments on this work.
The first and third authors wish to thank the Observatoire de la C\^ote d'Azur and the Laboratoire 
J.-L. Lagrange for their hospitality and financial support. TN's research was supported by the NSF under
grant DMS-1764119 and by an AMS Centennial Fellowship. Part of this work was done while TN was
visiting the Department of Mathematics and the Program in Applied and Computational Mathematics at Princeton University.

\end{document}